\begin{document}

\mainmatter  

\title{Discrete-Time System of an Intracellular Delayed HIV Model with CTL Immune Response\thanks{This is
a preprint whose final form is published by Springer Nature Switzerland AG in the book 
'Dynamic Control and Optimization'.}}

\titlerunning{Discrete-Time System of an HIV Model}  

\author{Sandra Vaz\inst{1} \and Delfim F. M. Torres\inst{2}}

\authorrunning{S. Vaz and D. F. M. Torres}

\tocauthor{Sandra Vaz and Delfim F. M. Torres}

\institute{Center of Mathematics and Applications (CMA-UBI),\\ 
Department of Mathematics, University of Beira Interior, 
6201-001 Covilh\~{a}, Portugal\\
\email{svaz@ubi.pt},
\texttt{https://orcid.org/0000-0002-1507-2272}
\and
Center for Research and Development in Mathematics and Applications (CIDMA),\\ 
Department of Mathematics, University of Aveiro, 3810-193 Aveiro, Portugal\\ 
\email{delfim@ua.pt}, 
\texttt{https://orcid.org/0000-0001-8641-2505}}

\maketitle  


\begin{abstract}
In [Math. Comput. Sci. 12 (2018), no.~2, 111--127], a delayed model 
describing the dynamics of the Human Immunodeficiency Virus (HIV) 
with Cytotoxic T Lymphocytes (CTL) immune response is investigated
by Allali, Harroudi and Torres. Here, we propose a discrete-time 
version of that model, which includes four nonlinear difference 
equations describing the evolution of uninfected, infected, 
free HIV viruses, and CTL immune response cells and includes 
intracellular delay. Using suitable Lyapunov functions, 
we prove the global stability of the disease free equilibrium point 
and of the two endemic equilibrium points. We finalize by making some 
simulations and showing, numerically, the consistence 
of the obtained theoretical results. 

\keywords{compartmental models, stability analysis, Lyapunov functions, Mickens method.}

\end{abstract}


\section{Introduction}

Several mathematical models have been developed to better understand the dynamics 
of the HIV disease \cite{Ciu,Kirs,Nov,Per:99}. Human immunodeficiency virus (HIV)  
causes acquired immunodeficiency syndrome (AIDS), which is considered the end-stage 
of the infection. In this stage, the immune system fails to protect the whole body 
against harmful intruders. This happens because of the destruction of most of CD4+ T 
cells by the HIV virus, reducing them to fewer than 200 cells \cite{Gallo,Weiss}. 
Among available mathematical models, in \cite{MyID:318} HIV and tuberculosis 
coinfection is investigated. A particular case, 
using real data from Cape Verde islands, has been carried out in \cite{MyID:359},  
while the discrete case was analyzed in \cite{VD}, showing that ending AIDS epidemic 
by 2030 is a nontrivial task. Several models introduce the effect of cellular immune 
response, also called the cytotoxic T-lymphocyte (CTL) response, which attacks and kills 
the infected cells, see, for instance, \cite{Deboer,RSD,Sta}. The models show that 
this cellular immune response can control the load of HIV viruses. In \cite{Cul}, 
it is assumed that CTL proliferation depends, besides infected cells, as usual, 
also on healthy cells. Recently, the same problem was tackled by introducing time delays 
\cite{Elaiw,RSD}, which is justified by the fact that uninfected cells must be in contact 
with the HIV virus before they become infected. In \cite{HIV}, the investigation continued 
and the proposed basic model, illustrating this type of scenario, is
\begin{equation}
\label{eq:model}
\begin{cases}
\dot{X}(t) = \lambda -d X(t) - \beta X(t) V(t), \\[0.2 cm]
\dot{Y}(t) = \beta X(t) V(t) - a Y(t) - p Y(t) Z(t), \\[0.2 cm]
\dot{V}(t) =a N Y(t)- \mu V(t),\\[0.2 cm]
\dot{Z}(t) = cX(t)Y(t)Z(t)- s Z(t),
\end{cases}
\end{equation}
with given initial conditions $X(0) = X_{0}$, $Y(0) = Y_{0}$, $V(0) = V_{0}$, and $Z(0) = Z_{0}$. 
In this model \eqref{eq:model}, $X(t)$, $Y(t)$, $V(t)$ and $Z(t)$ denote, respectively, the concentrations 
at time $t$ of uninfected cells, infected cells, HIV virus, and CTL cells. The healthy CD4+ cells 
grow at a rate $\lambda$, decay at a rate $d X(t)$, and become infected by the virus at a rate 
$\beta X(t) V(t)$. Infected cells $Y$ die at a rate $a$ and are killed by the CTL response 
at a rate $p$. Free virus $V$ is produced by the infected cells at a rate a $N$ and decay 
at a rate $\mu$, where $N$ is the number of free virus produced by each actively infected 
cell during its life time. Finally, CTLs $Z$ expand in response to viral antigen derived 
from infected cells at a rate $c$ and decay in the absence of antigenic stimulation at a rate $s$. 
Our starting point here will be an extension of the continuous model \eqref{eq:model},
composed by nonlinear delayed ordinary differential equations. 
For most of these types of systems we cannot find the exact analytical solution. 
To perform numerical simulations using digital computers, we need to discretize the systems 
\cite{Elaydi:05}. There are several methods that allow us to discretize a model. One that 
has presented interesting results, and that we use here, is the nonstandard finite discrete 
difference (NSFD) scheme of Mickens \cite{Mickens:94,Mickens:02,Mickens:05,Mickens:07}.

Our work is organized as follows. Section~\ref{section2} is devoted to the delayed version 
of the continuous model \eqref{eq:model}, presenting its equilibrium points and available 
results about their stability. Section~\ref{section3} is dedicated to the presentation 
of our discrete model and the proof of existence, positivity and boundedness of solutions. 
We end the section by proving the global stability of the equilibrium points, using 
suitable Lyapunov functions, followed by some numerical simulations. Finally, 
conclusions are given in Section~\ref{section4}.


\section{Preliminaries}
\label{section2}

We start by presenting the continuous-time model with delays
that serves as the basis of our current work,
as well as results regarding the stability of its equilibrium points.

In order to be realistic, in \cite{HIV} it has been introduced an intracellular time delay 
to the system of equations \eqref{eq:model}. Then, the model takes the following form:
\begin{equation}
\label{eq:modelD}
\begin{cases}
\dot{X}(t) = \lambda - d X(t) - \beta X(t) V(t),\\[0.2 cm]
\dot{Y}(t) = \beta X(t-\tau) V(t-\tau) - a Y(t) - p Y(t) Z(t), \\[0.2 cm]
\dot{V}(t) =a N Y(t)- \mu V(t),\\[0.2 cm]
\dot{Z}(t) = cX(t)Y(t)Z(t)-s Z(t).
\end{cases}
\end{equation}
Here, the delay $\tau$ represents the time needed for infected cells to produce virions 
after viral entry. The  model \eqref{eq:modelD} is a system of delayed ordinary differential 
equations. For such kind of problems, initial functions need to be addressed and an appropriate 
functional framework needs to be specified. Following \cite{HIV}, we consider
the Banach space $X = C([-\tau,0]; \mathbbm{R}^{4})$ of continuous mappings from
$[-\tau,0]$ to  $\mathbbm{R}^{4}$, equipped with the sup-norm  
$\|\phi \| = \underset{-\tau \leq t \leq 0}{\sup} |\phi(t)|$. 
It is assumed that the initial functions verify 
$(X(\theta), Y(\theta), V(\theta), Z(\theta)) \in X$. Also, from biological reasons, 
these initial functions $X(\theta)$, $Y(\theta)$, $V(\theta)$ and $Z(\theta)$ 
have to be nonnegative: $X(\theta)\geq 0$, $Y(\theta) \geq 0$, $V(\theta)\geq 0$, 
$Z(\theta)\geq 0$, for $\theta \in [-\tau, 0]$.  In Theorem~1 of \cite{HIV} it is  
proved that any solution of this system, satisfying certain conditions, is nonnegative 
and bounded for all $\tau \geq 0$. Moreover, the continuous model has three equilibrium points:
\begin{equation*} 
E_{0}=\left( \dfrac{\lambda}{d}, 0,0,0 \right)
\end{equation*}
and two endemic equilibrium points given by
\begin{equation*}
E^{\ast}=\left(\dfrac{\mu}{N \beta}, \dfrac{\lambda \beta N - d \mu}{a N \beta}, 
\dfrac{\lambda \beta N - d \mu}{\mu \beta}, 0\right)
\end{equation*}
and 
\begin{equation*}
\overline{E}=\left(\dfrac{\lambda \mu c - \beta a N s}{d \mu c}, 
\dfrac{d s \mu}{\lambda \mu c - \beta a N s}, 
\dfrac{d s a N}{ \lambda \mu c- \beta a N s}, 
\frac{\beta a N}{\mu p} \left( \dfrac{\lambda \mu c 
- \beta a N s}{d \mu c}\right)-\dfrac{a}{p} \right).
\end{equation*}
Regarding the stability of the disease-free equilibrium $E_{0}$, 
the following result was proved.

\begin{theorem}[See Theorem~2 of \cite{HIV}]
The local stability of the disease-free equilibrium point depends 
on the value $N \beta \lambda- d \mu$. Precisely, 
\begin{enumerate}
\item if $N \beta \lambda - d \mu< 0$, then the disease-free equilibrium 
point $E_{0}$ is locally asymptotically stable for any time delay $\tau \geq 0$;

\item if  $N \beta \lambda - d \mu > 0$, then the equilibrium $E_{0}$ 
is unstable for any time delay $\tau \geq 0$.
\end{enumerate}
\end{theorem}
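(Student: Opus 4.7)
The plan is to study the characteristic equation at $E_{0}$ and use the classical continuity-in-$\tau$ argument for delay systems, treating the two signs of $N\beta\lambda - d\mu$ separately.

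First I would linearize \eqref{eq:modelD} about $E_{0}=(\lambda/d,0,0,0)$ by setting $x=X-\lambda/d$, $y=Y$, $v=V$, $z=Z$. The nonlinear terms $\beta XV$, $pYZ$ and $cXYZ$ all vanish to first order (they are quadratic or cubic in the perturbation), so the linearization decouples nicely and the characteristic determinant factors as
\begin{equation*}
(\xi+d)(\xi+s)\Bigl[(\xi+a)(\xi+\mu) - \tfrac{aN\beta\lambda}{d}\,e^{-\xi\tau}\Bigr]=0.
\end{equation*}
The roots $\xi=-d$ and $\xi=-s$ are always in the left half-plane, so the stability of $E_{0}$ is governed entirely by the transcendental factor
\begin{equation*}
\Delta(\xi,\tau) := \xi^{2}+(a+\mu)\xi+a\mu - \tfrac{aN\beta\lambda}{d}\,e^{-\xi\tau}.
\end{equation*}

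For assertion (1), assume $N\beta\lambda-d\mu<0$. At $\tau=0$ the equation $\Delta(\xi,0)=0$ is a quadratic with positive coefficients $a+\mu$ and $a\mu - aN\beta\lambda/d = a(d\mu - N\beta\lambda)/d>0$, so Routh--Hurwitz gives two roots with negative real parts. To extend this to $\tau>0$, I would rule out purely imaginary roots: substituting $\xi=i\omega$ with $\omega>0$, separating real and imaginary parts and squaring and adding yields
\begin{equation*}
\omega^{4}+(a^{2}+\mu^{2})\,\omega^{2}+\Bigl[(a\mu)^{2}-\bigl(\tfrac{aN\beta\lambda}{d}\bigr)^{2}\Bigr]=0.
\end{equation*}
Under the hypothesis $N\beta\lambda<d\mu$ one has $a\mu>aN\beta\lambda/d>0$, so the constant term is strictly positive; combined with the positive coefficient $a^{2}+\mu^{2}$, the left-hand side is strictly positive for every $\omega>0$, a contradiction. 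Hence no root of $\Delta$ ever meets the imaginary axis as $\tau$ varies. Since the roots of $\Delta(\cdot,\tau)$ depend continuously on $\tau$ and all of them lie in the open left half-plane at $\tau=0$, they remain there for every $\tau\geq 0$, giving local asymptotic stability.

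For assertion (2), assume $N\beta\lambda-d\mu>0$. Here I would argue directly that $\Delta(\cdot,\tau)$ admits a positive real root for any fixed $\tau\geq 0$: the map $\xi\mapsto \Delta(\xi,\tau)$ restricted to $\mathbbm{R}$ is continuous with $\Delta(0,\tau)=a\mu-aN\beta\lambda/d = a(d\mu-N\beta\lambda)/d<0$ and $\Delta(\xi,\tau)\to+\infty$ as $\xi\to+\infty$ (the quadratic dominates the decaying exponential). By the intermediate value theorem there exists $\xi^{*}>0$ with $\Delta(\xi^{*},\tau)=0$, so $E_{0}$ is unstable for every $\tau\geq 0$. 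The only delicate step is the first case, where one must exclude imaginary crossings uniformly in $\tau$; that is exactly what the squared-modulus identity above accomplishes.
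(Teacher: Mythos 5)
Your argument is correct, but note that the paper itself gives no proof of this statement: it is quoted verbatim as Theorem~2 of \cite{HIV} and used as a preliminary. Your route --- linearizing at $E_{0}$, factoring the characteristic quasi-polynomial as $(\xi+d)(\xi+s)\bigl[(\xi+a)(\xi+\mu)-\tfrac{aN\beta\lambda}{d}e^{-\xi\tau}\bigr]$, applying Routh--Hurwitz at $\tau=0$ together with the squared-modulus exclusion of imaginary-axis crossings for case~(1), and the intermediate value theorem for a positive real root in case~(2) --- is the standard one and is essentially the proof given in the cited reference, so there is nothing to add beyond observing that the constant term $a\mu-\tfrac{aN\beta\lambda}{d}\neq 0$ also rules out $\xi=0$, which your modulus identity already covers at $\omega=0$.
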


For the local stability of the infected-equilibrium $E^{\ast}$, 
the following result holds.

\begin{theorem}[See Theorem~3 of \cite{HIV}]
The local stability of the disease-free equilibrium $E^{\ast}$ depends 
on the value of $\beta N (\mu c \lambda - \beta s a N) - \mu^{2} c d$. 
Precisely, 
\begin{enumerate}
\item if $\beta N (\mu c \lambda - \beta s a N) - \mu^{2} c d < 0$, 
then $E^{\ast}$ is locally asymptotically stable for any positive time delay;

\item if $\beta N (\mu c \lambda - \beta s a N) - \mu^{2} c d > 0$, 
then $E^{\ast}$ is unstable for any time delay. 
\end{enumerate}
\end{theorem}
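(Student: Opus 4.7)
\medskip
\noindent\textbf{Proof proposal.} My plan is to carry out the linear stability analysis of $E^{\ast}$ for the delayed system \eqref{eq:modelD}. Setting $X = X^{\ast} + x$, $Y = Y^{\ast} + y$, $V = V^{\ast} + v$, $Z = z$ (noting that $Z^{\ast} = 0$) and discarding quadratic terms, the $z$-equation decouples from the other three, reducing to $\dot{z} = (c X^{\ast} Y^{\ast} - s)\, z$. A direct substitution using the coordinates of $E^{\ast}$ gives
\[
c X^{\ast} Y^{\ast} - s \;=\; \frac{\beta N(\mu c \lambda - \beta s a N) - \mu^{2} c d}{a N^{2} \beta^{2}},
\]
so item~(2) is immediate: under its hypothesis this eigenvalue is positive, and $E^{\ast}$ is unstable for every delay $\tau$.

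For item~(1) the same eigenvalue is negative, so I would show that every root of the characteristic equation of the coupled $(x,y,v)$-subsystem lies in the open left half-plane for every $\tau \geq 0$. Writing $A = d + \beta V^{\ast} = \lambda \beta N / \mu$ and using the identity $A - \beta V^{\ast} = d$, expansion of $\det(\sigma I - J_{0} - J_{\tau} e^{-\sigma\tau}) = 0$ collapses, after using also $aN\beta X^{\ast} = a\mu$, to
\[
(\sigma + A)(\sigma + a)(\sigma + \mu) \;=\; a\mu\,(\sigma + d)\, e^{-\sigma\tau}. \qquad (\star)
\]
At $\tau = 0$, $(\star)$ becomes the polynomial $\sigma^{3} + (A+a+\mu)\sigma^{2} + A(a+\mu)\sigma + a\mu\beta V^{\ast} = 0$, whose coefficients are strictly positive, and the Routh--Hurwitz condition $(A+a+\mu)\,A(a+\mu) > a\mu\beta V^{\ast}$ follows from the elementary estimate $(A+a+\mu)\,A(a+\mu) \geq A(a+\mu)^{2} \geq 4Aa\mu > a\mu\beta V^{\ast}$, using $(a+\mu)^{2}\geq 4a\mu$ and $A > \beta V^{\ast}$. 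Hence all three roots lie in the open left half-plane when $\tau = 0$.

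To propagate the conclusion to $\tau > 0$, I would apply the standard continuity argument for delayed characteristic equations: a loss of stability as $\tau$ varies can only occur if some root crosses the imaginary axis. Substituting $\sigma = i\omega$ with $\omega > 0$ in $(\star)$ and taking moduli squared, the transcendental factor disappears and, writing $u = \omega^{2}$, one obtains the cubic
\[
u^{3} + (A^{2}+a^{2}+\mu^{2})\,u^{2} + A^{2}(a^{2}+\mu^{2})\,u + a^{2}\mu^{2}(A^{2} - d^{2}) \;=\; 0.
\]
The decisive identity is $A^{2} - d^{2} = (A-d)(A+d) = \beta V^{\ast}(A+d) > 0$, so every coefficient is strictly positive and the cubic admits no positive real root; separately, $\sigma = 0$ is not a root of $(\star)$ because $A > d$. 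Consequently no characteristic root can cross the imaginary axis as $\tau$ varies, and the stability established at $\tau = 0$ persists for every $\tau \geq 0$.

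I expect the main obstacle to be precisely the positivity $A^{2} - d^{2} > 0$ in the cubic's constant term, which relies on the algebraic identity $A - d = \beta V^{\ast}$ together with the endemic existence condition $V^{\ast} > 0$. This is exactly what distinguishes $E^{\ast}$ from the disease-free equilibrium and is what makes the delay harmless in case~(1); the remaining steps are a routine application of Routh--Hurwitz combined with the classical no-imaginary-crossing argument.
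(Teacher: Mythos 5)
Your argument is correct: the $z$-direction decouples in the linearization, yielding the eigenvalue $cX^{\ast}Y^{\ast}-s$ whose sign is that of $\beta N(\mu c\lambda-\beta s a N)-\mu^{2}cd$ (which settles item~(2)), and your reduction of the $(x,y,v)$-block to $(\sigma+A)(\sigma+a)(\sigma+\mu)=a\mu(\sigma+d)e^{-\sigma\tau}$ with $A=\lambda\beta N/\mu$, followed by Routh--Hurwitz at $\tau=0$ and the no-imaginary-axis-crossing computation hinging on $A^{2}-d^{2}=\beta V^{\ast}(A+d)>0$, is the standard delay-independent stability argument and goes through as you describe. Note that this paper states the result without proof, importing it from Theorem~3 of \cite{HIV}; your proof follows the same linearization route used in that reference, so there is nothing genuinely different to compare.
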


For the second endemic equilibrium point $\overline{E}$, 
the following result has been proved.

\begin{theorem}[See Theorem~4 of \cite{HIV}]
Assume that  $\mu c \lambda - \beta s a N > 0$. 
If $\beta N (\mu c \lambda - \beta s a N) - \mu^{2} c d > 0$, 
then the infected equilibrium point $\overline{E}$ 
is locally asymptotically stable for $\tau=0$.
\end{theorem}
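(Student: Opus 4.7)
The plan is to proceed by linearization, since the theorem concerns only local asymptotic stability at $\tau=0$, in which case the delayed system \eqref{eq:modelD} collapses to the ODE system \eqref{eq:model}. First I would check that the two standing hypotheses, $\mu c \lambda - \beta s a N > 0$ and $\beta N(\mu c \lambda - \beta s a N) - \mu^{2} c d > 0$, guarantee that all four coordinates of $\overline{E}$ are strictly positive (so the equilibrium is biologically meaningful and in the interior of the positive orthant). The second inequality is exactly what one obtains when computing $\overline{Z}>0$ after rewriting it as $\frac{a}{p\mu}(\beta N \overline{X}-\mu)$ and clearing denominators with the expression for $\overline{X}$.

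Next I would compute the Jacobian of \eqref{eq:model} and evaluate it at $\overline{E}$. Using the equilibrium relations, in particular $c\overline{X}\,\overline{Y}=s$ (which makes the $(4,4)$-entry of $J(\overline{E})$ vanish) and $\beta \overline{X}\,\overline{V}=(a+p\overline{Z})\overline{Y}$, the matrix takes the form
\begin{equation*}
J(\overline{E})=\begin{pmatrix}
-d-\beta\overline{V} & 0 & -\beta\overline{X} & 0\\[2pt]
\beta\overline{V} & -a-p\overline{Z} & \beta\overline{X} & -p\overline{Y}\\[2pt]
0 & aN & -\mu & 0\\[2pt]
c\overline{Y}\,\overline{Z} & c\overline{X}\,\overline{Z} & 0 & 0
\end{pmatrix}.
\end{equation*}
Expanding $\det(\xi I-J(\overline{E}))$ along the last row (which has only three nonzero entries and produces several cofactors that are already block-triangular) yields a characteristic polynomial of the form $P(\xi)=\xi^{4}+a_{1}\xi^{3}+a_{2}\xi^{2}+a_{3}\xi+a_{4}$, whose coefficients I would express as polynomials in the model parameters and in the equilibrium coordinates.

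Stability then follows from the Routh--Hurwitz criterion for quartics: I need $a_{1},a_{2},a_{3},a_{4}>0$ together with $a_{1}a_{2}a_{3}>a_{3}^{2}+a_{1}^{2}a_{4}$. Positivity of $a_{1}$ is immediate since $-d-\beta\overline{V}$ and $-a-p\overline{Z}$ and $-\mu$ are all negative. For the remaining coefficients I would substitute the closed forms for $\overline{X},\overline{Y},\overline{V},\overline{Z}$ and factor out the common denominator $\lambda\mu c-\beta saN$, which is positive by hypothesis; the factor $\beta N(\mu c\lambda-\beta saN)-\mu^{2}cd$ then appears naturally inside $a_{4}$ (it essentially measures $\overline{Z}$), giving $a_{4}>0$. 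The positivity of $a_{2}$ and $a_{3}$ follows similarly once the equilibrium relations are used to simplify the many cross terms.

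The main obstacle will be the verification of the Hurwitz determinant condition $a_{1}a_{2}a_{3}-a_{3}^{2}-a_{1}^{2}a_{4}>0$, because after substitution this becomes a large rational expression in six parameters and four equilibrium coordinates. I expect to handle it by grouping terms according to their dependence on $\overline{Z}$ and $\overline{V}$, using $c\overline{X}\,\overline{Y}=s$ repeatedly to collapse products, and then exhibiting the quantity as a sum of manifestly positive monomials plus a positive multiple of the key expression $\beta N(\mu c\lambda-\beta saN)-\mu^{2}cd$. Once this is achieved, the Hartman--Grobman theorem delivers local asymptotic stability of $\overline{E}$ for the ODE system, i.e.\ for $\tau=0$, as claimed.
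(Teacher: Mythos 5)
First, a point of reference: the paper you are working from does not prove this statement at all --- it is imported verbatim as Theorem~4 of \cite{HIV}, so there is no internal proof to compare against. Your plan (set $\tau=0$ so that \eqref{eq:modelD} reduces to the ODE system \eqref{eq:model}, linearize at $\overline{E}$, and apply the Routh--Hurwitz criterion to the resulting quartic) is the standard route and is the one taken in the cited source. Your setup is correct as far as it goes: the Jacobian you display is right, the hypothesis $\beta N(\mu c\lambda-\beta saN)-\mu^{2}cd>0$ is indeed equivalent to $\overline{Z}>0$, and $c\overline{X}\,\overline{Y}=s$ does annihilate the $(4,4)$ entry.

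The gap is that your proof stops exactly where the theorem begins. You never compute $a_{1},\dots,a_{4}$, and you explicitly defer the fourth Hurwitz inequality $a_{1}a_{2}a_{3}>a_{3}^{2}+a_{1}^{2}a_{4}$ with ``I expect to handle it''; that inequality \emph{is} the theorem, everything before it being routine. Two concrete remarks. (i) The coefficients come out much more cleanly than you anticipate if you use the equilibrium identities $\beta\overline{X}\,\overline{V}=(a+p\overline{Z})\overline{Y}$ and $aN\overline{Y}=\mu\overline{V}$, which combine to give $aN\beta\overline{X}=\mu(a+p\overline{Z})$. Writing $A=d+\beta\overline{V}$, $B=a+p\overline{Z}$ and $W=cp\overline{X}\,\overline{Y}\,\overline{Z}$, one finds
\begin{equation*}
a_{1}=A+B+\mu,\quad a_{2}=A(B+\mu)+W,\quad a_{3}=\mu B\beta\overline{V}+W(A+\mu),\quad a_{4}=d\mu W,
\end{equation*}
all manifestly positive, so positivity of the coefficients is not where the work lies and does not require the term-grouping you describe. (ii) The quantity $a_{1}a_{2}a_{3}-a_{3}^{2}-a_{1}^{2}a_{4}$ is \emph{not} a sum of manifestly positive monomials after naive expansion: writing it as $a_{3}(a_{1}a_{2}-a_{3})-a_{1}^{2}a_{4}$ one meets genuinely negative groups such as $-(\mu B\beta\overline{V})^{2}$, $-W(A+\mu)\mu B\beta\overline{V}$ and $-(A+B+\mu)^{2}d\mu W$, which must be dominated by specific cross terms; for instance, absorbing $a_{1}^{2}a_{4}$ uses $A\ge d$ together with $(A+\mu)(B+\mu)-(A+B+\mu)\mu=AB>0$, and the remaining negative groups require separate estimates against $\mu B\beta\overline{V}\,A(B+\mu)(A+B+\mu)$ and $\mu B^{2}\beta\overline{V}W$. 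Until this bookkeeping is actually carried out, you have not established local asymptotic stability; you have only reduced the theorem to an unverified polynomial inequality.
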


For $\tau >0$, the stability of $\overline{E}$ remains open.
Here we provide, for the first time in the literature,
a proper discrete-time version of the HIV model \eqref{eq:modelD}.


\section{Main Results}
\label{section3}

We begin this section by presenting our discrete-time model. 
Afterwards, we show the well-posedness of the model, that is, 
we prove that its solutions are positive and bounded. Moreover, 
we show that the equilibrium points are the same of the continuous model. 
We finalize this section by proving the global stability of each equilibrium point. 
For that we use suitable Lyapunov functions. We end this section by presenting 
some numerical simulations, which show consistence with the obtained theoretical results.


\subsection{The discrete-time model}

One of the important features of the discrete-time epidemic models 
obtained by Mickens' method is that they present the same features 
as the corresponding original continuous-time models. Here, we construct 
a dynamically consistent numerical NSFD scheme for solving 
\eqref{eq:modelD} based on \cite{Mickens:94,Mickens:02,Mickens:05,Mickens:07}.  
Let us  define the time instants $t_{n}= nh$ with $n$ integer, the step size as
$h=t_{n+1} -t_{n}$, and $(X_{n}, V_{n}, Y_{n}, Z_{n})$ 
as the approximated values of $(X(nh), V(nh), Y(nh), Z(nh))$. 

Discretizing system \eqref{eq:modelD} using the NSFD scheme, we obtain:

\begin{equation}
\label{discmodel}
\begin{cases}
\dfrac{X_{n+1}-X_{n}}{\phi(h)} = \lambda - d X_{n+1} - \beta X_{n+1} V_{n},\\[0.2 cm]
\dfrac{Y_{n+1}-Y_{n}}{\phi(h)} = \beta X_{n-m+1} V_{n-m} - a Y_{n+1} - p Y_{n+1} Z_{n}, \\[0.2 cm]
\dfrac{V_{n+1}-V_{n}}{\phi(h)} = a N Y_{n+1} -  \mu V_{n+1},\\[0.2 cm]
\dfrac{Z_{n+1}-Z_{n}}{\phi(h)} =  c X_{n} Y_{n+1} Z_{n} - s Z_{n+1},
\end{cases}
\end{equation}
where the denominator function is $\phi(h)=h$ \cite{Elaiw,Mickens:07}. 
Throughout our study, for brevity, we write $\phi(h)=\phi$. 
Let us assume that there exists an integer $m \in \mathbbm{N}$ 
with $\tau=m \phi$. The initial conditions of system \eqref{discmodel} are
\begin{equation}
\label{condI}
X_{k}=\psi_{k}^{1}\geq 0; \quad
Y_{k}=\psi_{k}^{2}\geq 0; \quad 
V_{k}=\psi_{k}^{3}\geq 0; \quad
Z_{k}=\psi_{k}^{4}\geq 0
\end{equation}
for all $k=-m, -m+1, \ldots, 0$ and 
$\psi_{0}^{i}>0$, $i=1,2,3,4$.

Define the region 
\begin{equation*}
\Gamma = \left\{(x,y,v,z): 0 < X_{n}, Y_{n}, V_{n} \leq N_{1}, Z_{n} < N_{2}\right\},
\end{equation*}
where $N_{1}= \dfrac{a N \lambda}{Q}$,  
$Q=\min \{ d, \frac{a}{2}, \mu\}$ and 
$N_{2}=\dfrac{cX_{n}  \beta X_{n-m+1} V_{n-m}}{p s}$.

\begin{lemma}
Any solution $(X_{n},Y_{n},V_{n},Z_{n})$ of model \eqref{discmodel} 
with initial conditions \eqref{condI} is positive and ultimately bounded.
\end{lemma}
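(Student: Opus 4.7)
The plan is to prove positivity and boundedness separately, in each case by induction on $n$ starting from the nonnegative data in \eqref{condI}. For positivity I would first solve each of the four equations of \eqref{discmodel} implicitly for the $(n+1)$-th index, obtaining
\[
X_{n+1} = \frac{X_n + \phi\lambda}{1 + \phi d + \phi\beta V_n}, \qquad
Y_{n+1} = \frac{Y_n + \phi\beta X_{n-m+1}V_{n-m}}{1 + \phi a + \phi p Z_n},
\]
\[
V_{n+1} = \frac{V_n + \phi aN\, Y_{n+1}}{1 + \phi\mu}, \qquad
Z_{n+1} = \frac{\bigl(1 + \phi c X_n Y_{n+1}\bigr) Z_n}{1 + \phi s}.
\]
Each right-hand side is a ratio of nonnegative quantities with a strictly positive denominator whenever the entries on the right are nonnegative; the source term $\phi\lambda > 0$ makes $X_{n+1}>0$, and then $Y_{n+1}, V_{n+1}, Z_{n+1} \ge 0$ follow successively. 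An induction in $n$ closes this half.

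\emph{Boundedness of $X,Y,V$.} To accommodate the intracellular delay I would introduce the shifted combination
\[
G_n := X_n + Y_{n+m} + \tfrac{1}{2N}V_{n+m}.
\]
Adding the corresponding three equations of \eqref{discmodel}, the infection loss $\beta X_{n+1}V_n$ from the first cancels the delayed gain $\beta X_{(n+m)-m+1}V_{(n+m)-m}$ appearing in the second (this is exactly what forces the shift by $m$), and the loss $aY_{n+m+1}$ in the second is compensated, up to a factor $a/2$, by the term $aN Y_{n+m+1}/N$ coming from the third. Dropping the nonpositive contribution $-pY_{n+m+1}Z_{n+m}$, the resulting inequality takes the form
\[
G_{n+1} - G_n \le \phi\lambda - \phi Q\, G_{n+1},
\qquad Q = \min\bigl\{d,\tfrac{a}{2},\mu\bigr\},
\]
equivalently $G_{n+1} \le G_n/(1+\phi Q) + \phi\lambda/(1+\phi Q)$. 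A standard linear-recursion telescoping then yields $\limsup_n G_n \le \lambda/Q$, and the uniform ultimate bound $X_n, Y_n, V_n \le N_1 = aN\lambda/Q$ drops out by nonnegativity of the three summands of $G_n$ (the factor $aN$ absorbing the $1/(2N)$ weight in front of $V_{n+m}$).

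\emph{Bound on $Z$ and main obstacle.} For the fourth component I would feed the bounds just obtained for $X_n$ and $Y_{n+1}$ into the explicit update $Z_{n+1} = (1 + \phi c X_n Y_{n+1})Z_n/(1+\phi s)$, and control the geometric multiplier to deduce the ultimate bound $Z_n < N_2$. The genuinely delicate step of the whole proof is the cancellation performed inside $G_n$: the delay forces the loss term of the $X$-equation at time $n$ to be paired with the gain term of the $Y$-equation at time $n+m$, so one must keep careful track of the indices $n+m+1$, $n+m$, $n-m+1$ and $n-m$ appearing in \eqref{discmodel}. Once this index bookkeeping is set up correctly, the rest of the argument is a routine linear recursion.
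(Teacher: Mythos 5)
Your positivity argument and your treatment of $X$, $Y$, $V$ coincide with the paper's: your $G_n$ is exactly the paper's Lyapunov-type quantity $\Omega_n = aN X_n + aN Y_{n+m} + \tfrac{a}{2}V_{n+m}$ divided by the constant $aN$, the index bookkeeping that cancels $\beta X_{n+1}V_n$ against the delayed gain in the $Y$-equation at time $n+m$ is the same, and the linear recursion $G_{n+1}(1+\phi Q)\le G_n+\phi\lambda$ with $Q=\min\{d,\tfrac a2,\mu\}$ is the paper's inequality verbatim. Up to that point the proposal is fine.

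The gap is in the $Z$-component. You propose to ``feed the bounds just obtained for $X_n$ and $Y_{n+1}$ into the explicit update $Z_{n+1}=(1+\phi c X_nY_{n+1})Z_n/(1+\phi s)$ and control the geometric multiplier.'' That multiplier is $(1+\phi c X_nY_{n+1})/(1+\phi s)$, and knowing only that $X_n,Y_{n+1}\le N_1$ gives the upper estimate $(1+\phi c N_1^2)/(1+\phi s)$, which is $\le 1$ only under the extra parameter condition $cN_1^2\le s$ — a condition that fails for realistic parameters (and must fail whenever the endemic equilibrium $\overline{E}$ with $\overline{Z}>0$ exists, since there $c\overline{X}\,\overline{Y}=s$ and the system is meant to sustain a positive CTL population). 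So the crude geometric estimate only shows that $Z_n$ grows at most geometrically; it does not yield ultimate boundedness. The paper closes this step differently: it uses the \emph{second} equation of \eqref{discmodel} to solve for the bilinear term, $pY_{n+1}Z_n=\beta X_{n-m+1}V_{n-m}-aY_{n+1}-\tfrac{Y_{n+1}-Y_n}{\phi}$, substitutes this into the $Z$-update so that the positive feedback term $c\phi X_nY_{n+1}Z_n$ is replaced by the $Z$-free quantity $\tfrac{c\phi X_n}{p}\bigl(\beta X_{n-m+1}V_{n-m}-\cdots\bigr)$, and then bounds it above by $\tfrac{c\phi X_n\beta X_{n-m+1}V_{n-m}}{p}$. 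This turns the recursion into $Z_{n+1}(1+s\phi)\le Z_n + \tfrac{c\phi X_n\beta X_{n-m+1}V_{n-m}}{p}$, whose telescoping gives the stated bound $N_2=\tfrac{cX_n\beta X_{n-m+1}V_{n-m}}{ps}$ with the already-established bounds on $X$ and $V$ controlling the numerator. Without some such use of the $-pY_{n+1}Z_n$ damping in the $Y$-equation, the boundedness of $Z_n$ does not follow, so this step of your plan needs to be replaced rather than merely elaborated.
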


\begin{proof}
Since model \eqref{discmodel} is linear in $X_{n+1}$, $Y_{n+1}$, $V_{n+1}$, and $Z_{n+1}$, 
we can rewrite it as
\begin{equation}
\label{discmodelexp}
\begin{cases}
X_{n+1}=\dfrac{\lambda \phi + X_{n}}{ 1 + d \phi + \beta \phi V_{n}},\\[0.3 cm] 
Y_{n+1} = \dfrac{Y_{n}+\beta \phi  X_{n-m+1} Y_{n-m}}{1 + a \phi  + p \phi Z_{n}}, \\[0.3 cm]
V_{n+1}= \dfrac{V_{n} + a N \phi Y_{n+1}}{1 +  \mu \phi},\\[0.3 cm]
Z_{n+1} =  \dfrac{ Z_{n} +c \phi X_{n} Y_{n+1} Z_{n}}{1 + s \phi} .
\end{cases}
\end{equation}
Since all the parameters of model \eqref{discmodel} and the initial conditions 
are positive, it follows, by induction, that $X_{n} \geq 0$, $Y_{n} \geq 0$, 
$V_{n} \geq 0$, and $Z_{n} \geq 0$, for all $ n \in \mathbbm{N}$.
Regarding the boundedness of the solutions, let
\begin{equation}
\Omega_{n}=a N X_{n} + a N Y_{n+m} + \frac{a}{2} V_{n+m},
\end{equation}
from which
\begin{align*}
\Omega_{n+1}-\Omega_{n}
&= a N (X_{n+1}-X_{n}) 
+ a N (Y_{n+m+1}-Y_{n+m}) + \frac{a}{2} (V_{n+m+1}-V_{n+m})\\
&= a N \phi(\lambda - d X_{n+1} - \beta X_{n+1} V_{n})
+a N \phi (\beta X_{n+1} V_{n} - a Y_{n+m+1}\\ 
&\quad - p Y_{n+m+1} Z_{n+m})+\frac{a}{2} \phi( a N Y_{n+m+1} -\mu V_{n + m +1})\\
&= a N \lambda \phi - a N d \phi X_{n+1} 
- \frac{a^{2}}{2} \phi N Y_{n+m+1} - a N p \phi Y_{n+\tau +1} Z_{n+\tau}\\
&\quad -\frac{a}{2}\mu \phi V_{n+m+1}.
\end{align*}
Set $Q=\min \{ d, \frac{a}{2}, \mu\}$. Then, 
\begin{equation*}
\Omega_{n+1}-\Omega_{n} \leq a N \lambda \phi - Q \phi \Omega_{n+1}
\end{equation*}
so  that
$\Omega_{n+1}(1 + Q \phi) \leq a N \lambda \phi + \Omega_{n}$.
Hence, by \cite{Phi},
\begin{equation*}
\Omega_{n+1} \leq \dfrac{a N \lambda \phi}{1 + Q \phi} + \dfrac{\Omega_{n}}{1 + Q \phi}
\end{equation*}
and
\begin{equation}
\Omega_{n} \leq \left(  \dfrac{1}{1+ Q \phi}\right)^{n}\Omega_{0} 
+ \dfrac{a N \lambda}{Q} \left( 1- \left(\frac{1}{1+Q \phi}\right)^{n}\right) 
\end{equation}
so that $\underset{n \to \infty}{\limsup}\, \Omega_{n} \leq \dfrac{a N \lambda}{Q}=N_{1}$. 
Therefore, $\underset{n \to \infty}{\limsup} X_{n} \leq N_{1}$, 
$\underset{n \to \infty}{\limsup} Y_{n}\leq N_{1}$, 
and $\underset{n \to \infty}{\limsup} V_{n}\leq N_{1}$.
From the second and last equation of system \eqref{discmodel} we have
\begin{equation*}
Z_{n+1}-Z_{n} =  c \phi X_{n} Y_{n+1} Z_{n} - s \phi Z_{n+1}
\end{equation*}
or 
\begin{align*}
Z_{n+1}-Z_{n} &=  c \phi X_{n} \left(\beta X_{n-m+1} V_{n-m}
-a Y_{n+1}-\frac{(Y_{n+1}+Y_{n})}{\phi}\right) - s \phi Z_{n+1}\\
&\leq \frac{c \phi X_{n}\beta X_{n-m+1} V_{n-m}}{p} - s \phi Z_{n+1}.
\end{align*}
It follows from \cite{Phi} that
\begin{align*}
Z_{n+1} 
& \leq  \frac{ \frac{c \phi X_{n}\beta X_{n-m+1} V_{n-m}}{p}}{1+ s \phi} 
+ \frac{Z_{n} }{1+ s \phi}\\
&\leq \left( \dfrac{1}{1+s \phi}\right)^{n+1}Z_{0}
+\frac{c X_{n} \beta X_{n-m+1} V_{n-m}}{p s}\left( 1
- \left( \frac{1}{1+ s \phi} \right)^{n+1}\right).
\end{align*}
Consequently, 
$$
Z_{n} \leq \frac{c X_{n}(\beta X_{n-m+1} V_{n-m})}{p \ s}=N_{2}
$$ 
and every local solution $(X_{n}, Y_{n}, V_{n}, Z_{n})$ tends 
to $\Gamma$ as $n \to \infty$. \qed
\end{proof}

System \eqref{discmodel} has three equilibria:
\begin{enumerate}
\item[i)] the disease free equilibrium point 
$E_{0}=\left( \frac{\lambda}{d}, 0, 0, 0\right)$;

\item[ii)] the persistent infection equilibrium point without immune response,
\begin{equation*}
E^{\ast}=\left(  \frac{\mu}{\beta N}, \frac{\beta N \lambda -d \mu}{\beta N a}, 
\frac{\beta N \lambda - d \mu}{\beta \mu}, 0 \right);
\end{equation*}
\item[iii)] the persistent infection equilibrium with immune response, 
\end{enumerate}
\begin{equation*}
\overline{E}=\left( \frac{\lambda c \mu - \beta s a N}{d c \mu}, 
\frac{d \mu s}{ \lambda c \mu - \beta s a N}, 
\frac{s a N d}{ \lambda c \mu - \beta s a N}, 
\frac{\beta a N (\lambda c \mu - \beta s a N) - a d c \mu^{2}}{p d c \mu^{2}}\right).
\end{equation*}

The equilibrium point $E^{\ast}$ only exists if $\beta N \lambda -d \mu >0$, 
so let us define the basic reproduction number as
\begin{equation*}
\mathcal{R}_{0}:= \frac{\beta N \lambda}{d \mu}.
\end{equation*}
The equilibrium $\overline{E}$ only exists if 
$\beta a N (\lambda c \mu - \beta s a N) - a d c \mu^{2}>0$, 
so let us set the humoural immune response reproduction number as
\begin{equation*}
\mathcal{R}_{1}:=\dfrac{\beta N (\lambda c \mu - \beta s a N)}{d c \mu^{2}}.
\end{equation*}
Clearly,
\begin{equation*}
\mathcal{R}_{1}=\dfrac{\mathcal{R}_{0} (\lambda c \mu 
- \beta s a N)}{\lambda c \mu} < \mathcal{R}_{0}.
\end{equation*}
We can express the equilibrium points in terms of
$\mathcal{R}_{0}$ and $\mathcal{R}_{1}$ as follows:
\begin{align*}
E_{0}&=(X_{0},V_{0}, V_{0},Z_{0})=\left(\frac{\lambda}{d},0,0,0\right),\\
E^{\ast}&=(X^{\ast},Y^{\ast},V^{\ast}, Z^{\ast})=\left(\frac{X_{0}}{\mathcal{R}_{0}},
\frac{\lambda}{a \mathcal{R}_{0}} (\mathcal{R}_{0}-1),
\frac{N \lambda}{\mu \mathcal{R}_{0}}(\mathcal{R}_{0}-1),0\right),\\
\overline{E}&=(\overline{X},\overline{Y},\overline{V},\overline{Z})
=\left( \frac{\mathcal{R}_{1} \mu}{\beta N},  \frac{s \beta N}{\mu c \mathcal{R}_{1}}, 
\frac{\beta N^{2} a s}{\mu^{2} c \mathcal{R}_{1}},\frac{a (\mathcal{R}_{1}-1)}{p}\right).
\end{align*}
We can see from the previous relations that $E^{\ast}$ exists 
when $\mathcal{R}_{1}<1<\mathcal{R}_{0}$ and $\overline{E}$ only exists 
if  $\mathcal{R}_{0}>\mathcal{R}_{1}>1$.


\subsection{Global stability}

In this section, we prove the  global stability of all the equilibria using 
suitable Lyapunov functions. We use the function $G(x)= x-\ln(x) -1$ 
that is positive for all $x > 0$ and $G(1)=0$. We make also use of the basic inequality
\begin{equation}
\label{ineqlya}
\ln(x) \le x-1.
\end{equation}

\begin{theorem}
Suppose that $\mathcal{R}_{0} \le 1$. 
Then $E_{0}$ of model \eqref{discmodel} 
is globally asymptotically stable.
\end{theorem}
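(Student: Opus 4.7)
The plan is to build a discrete Lyapunov functional that mirrors the standard continuous-time construction for this class of HIV compartmental models. Since the disease-free equilibrium has $X_0=\lambda/d>0$ while $Y_0=V_0=Z_0=0$, I would apply the Volterra-type function $G$ only on the $X$-component and carry $Y$, $V$, $Z$ linearly. To absorb the delayed infection term $\beta X_{n-m+1}V_{n-m}$ appearing in the second equation of \eqref{discmodel}, I would introduce a telescoping sum of length $m$. My candidate is
\begin{equation*}
L_n \;=\; X_0\, G\!\left(\frac{X_n}{X_0}\right) \;+\; Y_n \;+\; \frac{1}{N}\,V_n \;+\; \beta\phi \sum_{k=n-m}^{n-1} X_{k+1}\, V_k.
\end{equation*}

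Next I would compute $\Delta L_n := L_{n+1}-L_n$ term by term. For the $G$-piece, inequality \eqref{ineqlya} applied to $x=X_n/X_{n+1}$ yields the standard upper bound
\begin{equation*}
X_0\!\left[G\!\left(\tfrac{X_{n+1}}{X_0}\right)-G\!\left(\tfrac{X_n}{X_0}\right)\right] \;\le\; (X_{n+1}-X_n)\,\frac{X_{n+1}-X_0}{X_{n+1}},
\end{equation*}
and substituting the first equation of \eqref{discmodel} together with $\lambda=dX_0$ rewrites this as $-\phi d(X_{n+1}-X_0)^2/X_{n+1}-\phi\beta X_{n+1}V_n+\phi\beta X_0 V_n$. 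The differences $Y_{n+1}-Y_n$ and $(V_{n+1}-V_n)/N$ are read directly from \eqref{discmodel}, and the telescoping sum contributes $\beta\phi[X_{n+1}V_n-X_{n-m+1}V_{n-m}]$. The two $\beta X_{n+1}V_n$ terms cancel, as do the two $\beta X_{n-m+1}V_{n-m}$ terms, and the weight $1/N$ in front of $V_n$ annihilates the $aY_{n+1}$ contributions produced by $\Delta Y_n$ and $\Delta V_n/N$. Using $\beta X_0=\mu\mathcal{R}_0/N$, the residue should collapse to
\begin{equation*}
\Delta L_n \;\le\; -\phi d\,\frac{(X_{n+1}-X_0)^2}{X_{n+1}} \;+\; \phi\,\frac{\mu}{N}\,(\mathcal{R}_0-1)\,V_n \;-\; \phi p\, Y_{n+1} Z_n .
\end{equation*}

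When $\mathcal{R}_0\le 1$ every term on the right-hand side is non-positive, so $L_n$ is non-increasing along solutions, and the sequence $\{L_n\}$ converges since, by the preceding lemma, solutions are bounded. A discrete LaSalle-type argument applied to the $\omega$-limit set then forces $X_n\to X_0$, $V_n\to 0$, and $Y_{n+1}Z_n\to 0$, after which the second and fourth equations of \eqref{discmodel} drive $Y_n\to 0$ and $Z_n\to 0$, giving global asymptotic stability of $E_0$.

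The main obstacle I anticipate is the mismatch between $V_n$ in the infection term and $V_{n+1}$ produced by the NSFD form of the $V$-equation: if the simple telescoping sum above leaves a $\mu V_{n+1}/N$ versus $\beta X_0 V_n$ imbalance, I would either shift the index in the linear $V$-piece of $L_n$ or add a small compensating sum and exploit the identity $V_{n+1}(1+\phi\mu)=V_n+\phi aN Y_{n+1}$ to convert $V_{n+1}$ back to $V_n$ without introducing uncontrolled positive terms. A secondary technical issue is the critical case $\mathcal{R}_0=1$, where the $V_n$-contribution vanishes and one must rely on the strict decrease from the $X$-component together with the remaining difference equations to conclude convergence.
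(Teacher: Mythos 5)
Your construction is essentially the paper's: the same Volterra-type term $X_{0}G(X_{n}/X_{0})$, linear terms in $Y_{n}$ and $V_{n}$, and a telescoping delay sum $\sum_{j=n-m}^{n-1}\beta X_{j+1}V_{j}$; moreover the $V_{n+1}$-versus-$V_{n}$ mismatch you flag is real and is resolved exactly as you anticipate, since the paper weights the linear $V$-term by $\frac{1+\mu\phi}{N}$ from the outset, which turns the decay contribution $-\frac{\mu\phi}{N}V_{n+1}$ into $-\frac{\mu\phi}{N}V_{n}$ and produces the sign-definite residue $\frac{\mu\phi}{N}(\mathcal{R}_{0}-1)V_{n}$. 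The only other difference is that the paper additionally carries a term $\frac{p}{cN_{1}}(1+s\phi)Z_{n}$ in the functional so that $Z_{n}\to 0$ follows directly from $\Delta L_{n}\to 0$ (at the cost of needing $X_{n}\le N_{1}$ for the resulting cross term), whereas you recover $Z_{n}\to 0$ from the fourth difference equation once $Y_{n+1}\to 0$; both routes close the argument.
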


\begin{proof}
Define the discrete Lyapunov function $L_{n}$ as
\begin{align*}
L_{n}(X_{n}, Y_{n}, V_{n},Z_{n})
&=\frac{1}{\phi} \left( 
X_{0} G\left(\frac{X_{n}}{X_{0}}\right) + Y_{n}+\frac{1+\mu \phi }{N}V_{n} 
+ \frac{p}{c N_{1}}(1+s \phi) Z_{n}\right)\\ 
&\qquad +\sum_{j=n-m}^{n-1} \beta X_{j+1} V_{j}.
\end{align*}
It follows that $L_{n}(X_{n}, Y_{n}, V_{n},Z_{n})>0$ for all 
$X_{n}\ge 0$, $Y_{n} \ge 0$, $V_{n} \ge 0$ and $Z_{n} \ge 0$. Moreover, 
$L_{n}(X_{n}, Y_{n}, V_{n},Z_{n})=0$ if $(X_{n}, Y_{n}, V_{n},Z_{n})=E_{0}$. 
Computing $\Delta L_{n}= L_{n+1}-L_{n}$, we have
\begin{align*}
&\Delta L_{n}=\frac{1}{\phi} \left( X_{0} G \left( 
\frac{X_{n+1}}{X_{0}}\right) + Y_{n+1}+\frac{1+\mu \phi }{N}V_{n+1} 
+ \frac{p}{c N_{1}}(1 + s \phi)Z_{n+1}\right) \\
&+\sum_{j=n-m+1}^{n} \beta X_{j+1} V_{j}\\
&- \left[\frac{1}{\phi} \left( X_{0} G \left( \frac{X_{n}}{X_{0}} \right) 
+ Y_{n}+\frac{1+\mu \phi }{N} V_{n} + \frac{p}{c N_{1}}(1+s\phi) Z_{n} \right)
+ \sum_{j=n-m}^{n-1} \beta X_{j+1} V_{j} \right]\\
&=\frac{1}{\phi}\left[X_{0} \left(  \frac{X_{n+1}}{X_{0}}
-\frac{X_{n}}{X_{0}}+ \ln \left( \frac{X_{n}}{X_{n+1}}\right)\right)
+(Y_{n+1}-Y_{n})+\frac{1+\mu \phi}{N}(V_{n+1}-V_{n})\right]\\
&+\frac{1}{\phi}\left[\frac{p}{cN_{1}}(1+s\phi)(Z_{n+1}-Z_{n})\right]
+\beta \left(\sum_{j=n-m+1}^{n}  X_{j+1} V_{j}
-\sum_{j=n-m}^{n-1} X_{j+1} V_{j} \right).
\end{align*}
Using \eqref{ineqlya}, we have
\begin{align*}
&\Delta L_{n} \leq \frac{1}{\phi}\left[\left( 1-\frac{X_{0}}{X_{n+1}}\right)\left( 
X_{n+1} -X_{n}\right)+(Y_{n+1}-Y_{n})+\frac{1+\mu \phi}{N}(V_{n+1}-V_{n})\right]\\
&\quad +\frac{1}{\phi}\left[\frac{p}{cN_{1}}(1+s\phi)(Z_{n+1}-Z_{n})\right]
+\beta \left( X_{n+1} V_{n}-  X_{n-m+1} V_{n-m} \right)\\
&= \left( 1-\frac{X_{0}}{X_{n+1}}\right)\frac{X_{n+1} 
-X_{n}}{\phi}+\frac{Y_{n+1}-Y_{n}}{\phi}+\frac{V_{n+1}-V_{n}}{N\phi}
+\frac{p(Z_{n+1}-Z_{n})}{cN_{1}\phi}\\
&+\beta \left(\sum_{j=n-m+1}^{n}  X_{j+1} V_{j}-\sum_{j=n-m}^{n-1} 
X_{j+1} V_{j} \right)+\frac{\mu}{N} (V_{n+1}-V_{n}) 
+ \frac{p s}{c N_{1}}(Z_{n+1}-Z_{n}).
\end{align*}
From the equations of system \eqref{discmodel},
\begin{align*}
\Delta L_{n}&\le \left( 1-\frac{X_{0}}{X_{n+1}}\right) \left(  
\lambda - d X_{n+1} - \beta X_{n+1} V_{n}\right)\\
&\quad +(\beta X_{n-m+1} V_{n-m} - a Y_{n+1} - p Y_{n+1} Z_{n})\\
&\quad+\frac{1}{N}(a N Y_{n+1} -  \mu V_{n+1})
+\frac{p}{cN_{1}}(c X_{n} Y_{n+1} Z_{n} - s Z_{n+1})\\
&\quad+\beta \left( X_{n+1} V_{n}-  X_{n-m+1} V_{n-m} \right)\\
&\quad+\frac{\mu}{N} (V_{n+1}-V_{n}) + \frac{s \ p}{c N_{1}}(Z_{n+1}-Z_{n})\\
&=\left(  \lambda - d X_{n+1}\right) \left( 1-\frac{X_{0}}{X_{n+1}}\right)
+\beta X_{0} V_{n} -p Y_{n+1} Z_{n} -\frac{\mu}{N} V_{n}\\ 
&\quad+\frac{p}{c N_{1}}c X_{n} Y_{n+1} Z_{n}-\frac{s \ p }{c N_{1}} Z_{n}.
\end{align*}
Using the first equation of \eqref{discmodel} at the equilibrium point $E_{0}$,
\begin{align*}
\Delta L_{n}& \le d \left( X_{0} - X_{n+1}\right) \left( 1-\frac{X_{0}}{X_{n+1}}\right)
+V_{n}\left(\frac{ \beta\lambda}{d} -\frac{\mu}{N}\right)+\left(\frac{p X_{n}}{ N_{1}} 
- p\right) Y_{n+1} Z_{n}\\
&\qquad - \frac{s \ p}{c N_{1}} Z_{n}\\
&= - \frac{d \left(X_{n+1} -X_{0} \right)^{2}}{X_{n+1}} 
-\frac{\mu}{N} V_{n} \left( 1-\mathcal{R}_{0}  \right)-p Y_{n+1} 
Z_{n}\left( 1-\frac{X_{n}}{N_{1}}\right)-\frac{s \ p}{c N_{1}}Z_{n}.
\end{align*}
Since $\mathcal{R}_{0} \le 1$ and $\underset{n \to \infty}{\lim \sup} X_{n}=N_{1}$, 
one has $\Delta L_{n} \le 0$ for all $n \ge 0$, that is, $L_{n}$ is a monotone decreasing 
sequence. If $L_{n} \ge 0$, then there is a limit for $\underset{n \to \infty}{\lim}  L_{n} \ge 0$. 
Therefore, $\underset{n \to \infty}{\lim} \Delta L_{n}= 0$ implies 
$\underset{n \to \infty}{\lim} X_{n}= X_{0}$ and 
$$
\underset{n \to \infty}{\lim}Y_{n}
= \underset{n \to \infty}{\lim}V_{n}=\underset{n \to \infty}{\lim}Z_{n}=0. 
$$
So, if $\mathcal{R}_{0} \le 1$, 
then $E_{0}$ is globally asymptotically stable. \qed
\end{proof}

\begin{lemma}
\label{demo2}
If $\mathcal{R}_{1}<1<\mathcal{R}_{0}$, then $Y^{\ast} < \overline{Y}$.
\end{lemma}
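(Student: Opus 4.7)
The plan is to express both $Y^{*}$ and $\overline{Y}$ as values of one common strictly decreasing function of $X$, thereby reducing the claim to the corresponding inequality between $X^{*}$ and $\overline{X}$, which is immediate from the hypothesis $\mathcal{R}_{1}<1$.

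First, I would observe that any equilibrium $(X,Y,V,Z)$ of \eqref{discmodel} with $X,Y,V>0$ must satisfy, from the steady-state versions of the first and third equations,
\[
\lambda - dX - \beta X V = 0, \qquad a N Y - \mu V = 0.
\]
Eliminating $V$ yields
\[
Y \;=\; f(X) \;:=\; \frac{\mu}{aN}\left(\frac{\lambda}{\beta X}-\frac{d}{\beta}\right),
\]
and $f$ is strictly decreasing on $(0,\lambda/d)$ (its derivative is $-\mu\lambda/(aN\beta X^{2})<0$). Since both $E^{*}$ (with $Z^{*}=0$) and $\overline{E}$ satisfy these two equilibrium relations, we have $Y^{*}=f(X^{*})$ and $\overline{Y}=f(\overline{X})$.

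Second, I would invoke the explicit expressions already derived in the paper: $X^{*}=\mu/(\beta N)$ and $\overline{X}=\mathcal{R}_{1}\mu/(\beta N)$, which together give the identity $\overline{X}=\mathcal{R}_{1}X^{*}$. The assumption $\mathcal{R}_{1}<1$ then yields $\overline{X}<X^{*}$. Moreover $X^{*}<\lambda/d$ is equivalent to $\mathcal{R}_{0}>1$, which is also assumed, so both $X^{*}$ and $\overline{X}$ lie in the interval $(0,\lambda/d)$ on which $f$ is strictly decreasing and positive. Applying this monotonicity yields
\[
\overline{Y} \;=\; f(\overline{X}) \;>\; f(X^{*}) \;=\; Y^{*}.
\]

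There is no genuinely hard step here; the only care required is to ensure that both $X^{*}$ and $\overline{X}$ lie in the domain where $f$ is strictly decreasing (equivalently, where the associated $Y$-value is positive), and this is guaranteed precisely by the combined hypotheses $\mathcal{R}_{1}<1<\mathcal{R}_{0}$.
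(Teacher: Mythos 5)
Your proposal is correct, and it takes a genuinely different route from the paper's. The paper works directly on the closed-form expressions: it writes $Y^{\ast}-\overline{Y}=\left(\lambda\mu c\mathcal{R}_{1}(\mathcal{R}_{0}-1)-as\beta N\mathcal{R}_{0}\right)/\left(a\mu c\mathcal{R}_{1}\mathcal{R}_{0}\right)$ and then estimates the numerator by replacing $\mathcal{R}_{1}$ with $\mathcal{R}_{0}$. That replacement is lossy: the exact numerator is negative precisely because $\mathcal{R}_{1}<1$ is equivalent to $\lambda\mu c(\mathcal{R}_{0}-1)<as\beta N\mathcal{R}_{0}$, whereas the enlarged bound requires the strictly stronger inequality $\lambda\mu c(\mathcal{R}_{0}-1)<as\beta N$, which does not follow from the hypotheses alone. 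Your argument sidesteps this entirely and is, in that respect, tighter. What you do instead is structural: both $E^{\ast}$ and $\overline{E}$ satisfy the same $X$- and $V$-equilibrium equations of \eqref{discmodel}, hence lie on the same strictly decreasing curve $Y=f(X)$, and the claim collapses to $\overline{X}<X^{\ast}$, which is immediate from $\overline{X}=\mathcal{R}_{1}X^{\ast}$. This buys a nearly computation-free proof with a transparent mechanism (fewer target cells at $\overline{E}$ forces more infected cells on the common nullcline), at the modest cost of checking that both equilibria satisfy the two equations and that $\overline{X}$ lies in the domain of monotonicity. On that last point, be a little more careful than your closing sentence suggests: $\mathcal{R}_{1}<1<\mathcal{R}_{0}$ does not by itself give $\mathcal{R}_{1}>0$, i.e. $\lambda c\mu>\beta saN$, which you need for $\overline{X}>0$ and hence for $f(\overline{X})=\overline{Y}$ to be meaningful. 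This positivity is an implicit standing assumption in the paper as well (its proof divides by $\mathcal{R}_{1}$ and keeps inequality directions), so state it explicitly rather than asserting it is ``guaranteed precisely by'' the stated hypotheses. Note also that for the comparison itself you only need $f$ strictly decreasing on $(0,\infty)$ and $0<\overline{X}<X^{\ast}$; positivity of $f$ on $(0,\lambda/d)$ is not actually required.
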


\begin{proof}
One can easily see that
\begin{align*}
Y^{\ast}-\overline{Y} 
&= \frac{\lambda (\mathcal{R}_{0} -1)}{a \mathcal{R}_{0}}
-\frac{\beta s N}{\mu c \mathcal{R}_{1}}
=\frac{\lambda \mu c\mathcal{R}_{1}(\mathcal{R}_{0}-1) 
-a s \beta N \mathcal{R}_{0}}{ a \mu c \mathcal{R}_{1} \mathcal{R}_{0}}\\
&< \frac{\lambda \mu c \mathcal{R}_{0} (\mathcal{R}_{0}-1) 
- a s \beta N \mathcal{R}_{0}}{a \mu c \mathcal{R}_{1} \mathcal{R}_{0}}
= \frac{\lambda \mu c (\mathcal{R}_{0}-1) - a s \beta N }{a \mu c \mathcal{R}_{1}}
\end{align*}
and $Y^{\ast}-\overline{Y}<0$, that is, 
$- \lambda \mu c < a s \beta N -\lambda \mu c  \mathcal{R}_{0} 
< a s \beta N -\lambda  \mu c$. The proof is complete. \qed 
\end{proof}

\begin{theorem}
If $\mathcal{R}_{1} \le 1 < \mathcal{R}_{0}$, 
then $E^{\ast}$ is globally asymptotically stable.
\end{theorem}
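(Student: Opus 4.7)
The plan is to mirror the strategy just used for $E_{0}$, building a discrete Lyapunov function now centred at $E^{\ast}$ and proving $\Delta L_{n}\le 0$. Since $X^{\ast}, Y^{\ast}, V^{\ast}>0$ but $Z^{\ast}=0$, the natural candidate has $G$-terms in the first three variables, a purely linear term in $Z$, and the usual delay correction:
\begin{align*}
L_{n} &= \frac{1}{\phi}\!\left[ X^{\ast} G\!\left(\frac{X_{n}}{X^{\ast}}\right)
+ Y^{\ast} G\!\left(\frac{Y_{n}}{Y^{\ast}}\right)
+ \frac{V^{\ast}}{N} G\!\left(\frac{V_{n}}{V^{\ast}}\right)
+ \kappa Z_{n}\right] \\
& \quad + \beta X^{\ast} V^{\ast} \sum_{j=n-m}^{n-1} G\!\left( \frac{X_{j+1} V_{j}}{X^{\ast} V^{\ast}}\right),
\end{align*}
with a positive constant $\kappa$ (a natural first guess is $\kappa = p/(c\bar{X})$, possibly with a $1+s\phi$ discrete correction as in the $E_{0}$ proof). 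Positivity $L_{n}\geq 0$, with equality only at $E^{\ast}$, follows from $G\ge 0$ and the positivity of $Z_{n}$ already established.

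Next I would compute $\Delta L_{n}=L_{n+1}-L_{n}$, plug in the NSFD right-hand sides of \eqref{discmodel}, and apply $\ln x\le x-1$ to each $G$-difference exactly as in the $E_{0}$ computation. The delay sum telescopes and combines with the $\beta X_{n-m+1} V_{n-m}$ term from the $Y$-equation, producing the standard ``three-$G$'' combinations of the form $G(X^{\ast}/X_{n+1})$, $G(Y^{\ast} V_{n+1}/(Y_{n+1} V^{\ast}))$ and $G(X_{n-m+1} V_{n-m} Y^{\ast}/(X^{\ast} V^{\ast} Y_{n+1}))$, each of which is nonnegative and should appear with a negative coefficient. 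Using the $E^{\ast}$-identities $\lambda = dX^{\ast}+\beta X^{\ast}V^{\ast}$, $\beta X^{\ast}V^{\ast}=aY^{\ast}$, $aN Y^{\ast}=\mu V^{\ast}$, the $X$-part collapses to $-d(X_{n+1}-X^{\ast})^{2}/X_{n+1}$, exactly as before.

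The delicate step, and the one I expect to be the main obstacle, is bookkeeping the $Z_{n}$-contributions. The $Y^{\ast}$-term yields $pY^{\ast}Z_{n}$, while the $\kappa$-term yields $\kappa(cX_{n}Y_{n+1}Z_{n}-sZ_{n+1})$, and the cross factor $X_{n}$ prevents a direct $G$-type centring because $Z^{\ast}=0$. The idea is to fix $\kappa$ (using the relation $c\bar{X}\bar{Y}=s$ coming from the third endemic equation) so that these contributions combine into a term whose net $Z_{n}$-coefficient is, up to a positive multiple, $p(Y^{\ast}-\bar{Y})$; Lemma~\ref{demo2} then makes this coefficient nonpositive under the hypothesis $\mathcal{R}_{1}\le 1<\mathcal{R}_{0}$. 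Any residual $X_{n}/\bar{X}$ factor should be neutralised by the ultimate bound $X_{n}\le N_{1}$ from the boundedness lemma together with the asymptotic bound $X_{n+1}\to X^{\ast}$ on the invariant set.

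Once $\Delta L_{n}\le 0$ is established, $L_{n}$ is a nonincreasing nonnegative sequence, hence convergent, so $\Delta L_{n}\to 0$. The vanishing of the negative-definite $X$-piece forces $X_{n}\to X^{\ast}$, the vanishing of the $pY^{\ast}Z_{n}$-type term (with strictly negative coefficient when $\mathcal{R}_{1}<1$; the boundary case $\mathcal{R}_{1}=1$ is handled by the other negative pieces) forces $Z_{n}\to 0$, and the vanishing of the three $G$-combinations along the delay history forces $Y_{n}\to Y^{\ast}$ and $V_{n}\to V^{\ast}$. This gives the global asymptotic stability of $E^{\ast}$ and completes the proof.
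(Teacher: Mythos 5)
Your proposal follows essentially the same route as the paper's proof: the same Lyapunov function built from $G$-terms in $X,Y,V$, a linear term in $Z$ (the paper takes its coefficient to be $p(1+s\phi)/(cN_{1})$ and puts a $(1+\mu\phi)$ factor on the $V$-term, i.e.\ exactly the discrete corrections you anticipate), and the delay sum of $G$-terms; the cancellation then proceeds via the same equilibrium identities, the same ``three-$G$'' combination, the bound $\limsup X_{n}\le N_{1}$, and Lemma~\ref{demo2} to make the residual $pZ_{n}(Y^{\ast}-\overline{Y})$ term nonpositive, ending with LaSalle's invariance principle. No substantive difference in approach.
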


\begin{proof}
Define 
\begin{align*}
&\mathcal{L}_{n}(X_{n},Y_{n},V_{n}, Z_{n})\\
&=\frac{1}{\phi}\left[
X^{\ast} G\left( \frac{X_{n}}{X^{\ast}}\right)+ Y^{\ast} G\left( \frac{Y_{n}}{Y^{\ast}}\right)
+\frac{(1+\mu \phi) V^{\ast}}{N}G\left(\frac{V_{n}}{V^{\ast}}\right)
+ \frac{p(1+s \phi)}{c N_{1}}  Z_{n} \right]\\
&\quad +\beta X^{\ast} V^{\ast} \sum_{j=n-m}^{n-1} 
G\left( \frac{X_{j+1} V_{j}}{X^{\ast} V^{\ast}}\right).
\end{align*}
Then $\mathcal{L}_{n}(X_{n},Y_{n},V_{n},Z_{n})$ is positive 
for all $X_{n}, Y_{n}, V_{n}, Z_{n}$ strictly positive 
and it is equal to zero at $(X^{\ast},Y^{\ast},V^{\ast}, Z^{\ast})$. 
Computing $\Delta \mathcal{L}_{n}=\mathcal{L}_{n+1}-\mathcal{L}_{n}$, we get
\begin{align*}
\Delta \mathcal{L}_{n}
&=\frac{X^{\ast}}{\phi}G\left( \frac{X_{n+1}}{X^{\ast}}\right)
+ \frac{Y^{\ast}}{\phi} G\left( \frac{Y_{n+1}}{Y^{\ast}} \right)
+\frac{V^{\ast}(1+\mu \phi) }{N\phi}G\left( \frac{V_{n+1}}{V^{\ast}}\right)\\ 
&\quad + \frac{p(1+s \phi)}{c N_{1}\phi}Z_{n+1}
+\beta X^{\ast} V^{\ast}\sum_{j=n-m+1}^{n} 
G\left( \frac{X_{j+1} V_{j}}{X^{\ast} V^{\ast}}\right)\\
&=-\left[
\frac{X^{\ast}}{\phi}G\left( \frac{X_{n}}{X^{\ast}}\right)
+ \frac{Y^{\ast}}{\phi} G\left( \frac{Y_{n}}{Y^{\ast}}\right)
+\frac{ V^{\ast} (1+\mu \phi) }{N\phi}G\left( \frac{V_{n}}{V^{\ast}}\right)
+ \frac{p (1+s \phi) }{c N_{1}\phi}Z_{n} \right]\\
&\quad-\beta X^{\ast} V^{\ast}\sum_{j=n-m}^{n-1} G\left( \frac{X_{j+1} V_{j}}{X^{\ast} V^{\ast}}\right)\\
&=\frac{1}{\phi}\left( X^{\ast} \left( G \left( \frac{X_{n+1}}{X^{\ast} }\right) 
- G\left( \frac{X_{n}}{X^{\ast}}\right) \right) + Y^{\ast}\left( G\left( 
\frac{Y_{n+1}}{Y^{\ast}}\right) -G\left( \frac{Y_{n}}{Y^{\ast}}\right)\right) \right)\\
&\quad+\frac{1}{\phi}\left( \frac{p(1+s\phi)}{c N_{1}} \left( Z_{n+1}-Z_{n} \right) \right)
+\frac{V^{\ast} (1+\mu \phi)}{N}\left(G\left( \frac{V_{n+1}}{V^{\ast}}\right)
-G\left( \frac{V_{n}}{V^{\ast}}\right) \right)\\
&\quad+\beta X^{\ast} V^{\ast} \left( G\left( \frac{X_{n+1} V_{n}}{X^{\ast} V^{\ast}}\right)
-G\left( \frac{X_{n-m+1} V_{n-m}}{X^{\ast} V^{\ast}}\right)\right).
\end{align*}
Recalling inequality \eqref{ineqlya}, we have
\begin{equation*}
\begin{split}
G\left( \frac{\xi_{n+1}}{\xi^{\ast}}\right)- G\left( \frac{\xi_{n}}{\xi^{\ast}}\right)
&=\left( \frac{\xi_{n+1} -\xi_{n}}{\xi^{\ast}}\right)+\ln \left( \frac{\xi_{n}}{\xi_{n+1}}\right)\\ 
&\le \left( \xi_{n+1}-\xi_{n}\right)\left( \frac{1}{\xi^{\ast}}-\frac{1}{\xi_{n+1}}\right)
\end{split}
\end{equation*}
for $\xi = \{X, Y, V, Z\}$. Therefore, 
\begin{align*}
\Delta \mathcal{L}_{n}
&\le\frac{1}{\phi}\left( \left( X_{n+1}-X_{n}\right)\left( 1-\frac{X^{\ast}}{X_{n+1}}\right)
+  \left( Y_{n+1}-Y_{n}\right)\left( 1-\frac{Y^{\ast}}{Y_{n+1}}\right)\right)\\
&\quad +\frac{1}{\phi}\left(\frac{1}{N} \left( V_{n+1}-V_{n}\right)
\left( 1-\frac{V^{\ast}}{V_{n+1}}\right)+ \frac{p}{c N_{1}}  \left( Z_{n+1}-Z_{n}\right)\right)\\
&\quad+\frac{\mu V^{\ast}}{N}\left(G\left( \frac{V_{n+1}}{V^{\ast}}\right)
-G\left( \frac{V_{n}}{V^{\ast}}\right) \right)+\frac{p \ s}{c N_{1}}(Z_{n+1}-Z_{n})\\
&\quad+\beta X^{\ast} V^{\ast} \left( \frac{X_{n+1} V_{n}}{X^{\ast} V^{\ast}}
-\frac{X_{n-m+1} V_{n-m}}{X^{\ast} V^{\ast}} 
+\ln \left( \frac{X_{n-m+1} V_{n-m}}{X_{n+1} V_{n}}\right) \right).
\end{align*}
Using the equations of system \eqref{discmodel}, we have
\begin{align*}
\Delta \mathcal{L}_{n} 
&\le \left(\lambda - d X_{n+1} - \beta X_{n+1} V_{n}\right)\left( 1-\frac{X^{\ast}}{X_{n+1}}\right)\\
&\quad + \left(  \beta X_{n-m+1} V_{n-m} - a Y_{n+1} - p Y_{n+1} Z_{n}\right)\left( 1-\frac{Y^{\ast}}{Y_{n+1}}\right)\\
&\quad+\frac{1}{N} \left(  a N Y_{n+1} -  \mu V_{n+1}\right)\left( 1-\frac{V^{\ast}}{V_{n+1}}\right)
+ \frac{p}{c N_{1}}  \left(c X_{n}Y_{n+1}Z_{n}-s Z_{n+1}\right)\\
&\quad+\frac{\mu V^{\ast}}{N}\left( \frac{V_{n+1}}{V^{\ast}} 
- \frac{V_{n}}{V^{\ast}} +\ln\left( \frac{V_{n}}{V_{n+1}}\right) \right)
+\frac{p \ s}{c N_{1}}(Z_{n+1}-Z_{n})\\
&\quad+\beta X^{\ast} V^{\ast} \left( \frac{X_{n+1} V_{n}}{X^{\ast} V^{\ast}}
- \frac{X_{n-m+1} V_{n-m}}{X^{\ast} V^{\ast}} 
+\ln \left( \frac{X_{n-m+1} V_{n-m}}{X_{n+1} V_{n}}\right) \right).
\end{align*}
Expanding, simplifying, and using the conditions 
of system \eqref{discmodel} at $E^{\ast}$, where 
\begin{align*}
\lambda&=d X^{\ast} + \beta X^{\ast} V^{\ast},
\qquad \beta X^{\ast} V^{\ast}  = a Y^{\ast},
\qquad a N Y^{\ast}  = \mu V^{\ast}, 
\qquad  X^{\ast}=\frac{\mu}{\beta N},
\end{align*}
we get
\begin{align*}
\Delta \mathcal{L}_{n}
&\le \left( 1- \frac{X^{\ast}}{X_{n+1}}\right)(\lambda 
-d X_{n+1}) +\beta X^{\ast} V_{n}-p Y_{n+1} Z_{n} + a Y^{\ast}\\
&\quad -\beta X^{\ast} V^{\ast}\frac{X_{n-m+1}V_{n-m}Y^{\ast}}{x^{\ast} V^{\ast} Y_{n+1}}
+ p Y^{\ast} Z_{n}-aY^{\ast}\frac{V^{\ast} Y_{n+1}}{V_{n+1} Y^{\ast}} +\frac{\mu}{N}V^{\ast}\\
&\quad+\frac{p}{N_{1}}X_{n}Y_{n+1}Z_{n}-\frac{\mu}{N}V_{n}+\frac{\mu}{N}V^{\ast}
\ln\left(\frac{V_{n}}{V_{n+1}}\right)-\frac{p \ s}{c N_{1}} Z_{n}\\
&\quad+\beta X^{\ast} V^{\ast} \ln \left(\frac{X_{n-m+1} V_{n-m}}{X_{n+1} V_{n}} \right)\\ 
&\le -\frac{d (X_{n+1}-X^{\ast})^{2}}{X_{n+1}}-pY_{n+1}Z_{n} \left( 
1- \frac{X_{n}}{N_{1}}\right)+ p Z_{n} \left(Y^{\ast}- \overline{Y} \right)\\
&\quad+\beta X^{\ast} V^{\ast}\left[ 3- \frac{X^{\ast}}{X_{n+1}} 
- \frac{X_{n-m+1}V_{n-m} Y^{\ast}}{X^{\ast} V^{\ast} Y_{n+1}}
-\frac{V^{\ast} Y_{n+1}}{V_{n+1}Y^{\ast}}\right.\\
&\left.\qquad + \ln\left( 
\frac{X_{n-m+1} V_{n-m}}{X_{n+1} V_{n+1}}\right)\right]\\
&\le  -\frac{d (X_{n+1}-X^{\ast})^{2}}{X_{n+1}}-pY_{n+1}Z_{n} 
\left( 1- \frac{X_{n}}{N_{1}}\right)+ p Z_{n} \left(Y^{\ast}- \overline{Y} \right)\\
&\quad+\beta X^{\ast} V^{\ast}\left( -G\left(  \frac{X^{\ast}}{X_{n+1}} \right)
-G\left( \frac{X_{n-m+1}V_{n-m} Y^{\ast}}{X^{\ast} V^{\ast} Y_{n+1}}  \right)
-G\left(   \frac{V^{\ast} Y_{n+1}}{V_{n+1}Y^{\ast}}\right) \right).
\end{align*}
Hence, if  $\mathcal{R}_{1} \le 1 < \mathcal{R}_{0}$, and since 
$\underset{n \to \infty}{\lim \sup} X_{n}=N_{1}$ and Lemma~\ref{demo2} holds,
it follows that $\mathcal{L}_{n}$ is a monotone deceasing sequence. Since 
$\mathcal{L}_{n} \ge 0$, then  $\underset{n \to \infty}{\lim}{\mathcal{L}_{n}} \ge 0$. 
Therefore, $\underset{n \to \infty}{\lim} \Delta \mathcal{L}_{n}= 0$, 
which implies that $\underset{n \to \infty}{\lim} X_{n}= X^{\ast}$ 
and $\underset{n \to \infty}{\lim}Y_{n}=Y^{\ast}$, $\underset{n \to \infty}{\lim}V_{n}=V^{\ast}$, 
and $\underset{n \to \infty}{\lim}Z_{n}=Z^{\ast}$. Applying LaSalle's invariance principle, 
we conclude that $E^{\ast}$ is globally asymptotically stable.
\qed
\end{proof}

\begin{theorem}
Suppose that $\mathcal{R}_{1} >1$. 
Then $\overline{E}$ is globally asymptotically stable.
\end{theorem}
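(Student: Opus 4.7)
The plan is to mirror the structure of the previous two stability proofs, but now with a Lyapunov function that includes a proper $G$-term in $Z$ (since $\overline{Z}>0$). Concretely, I would define
\begin{align*}
\overline{\mathcal{L}}_n
&=\frac{1}{\phi}\left[\,\overline{X}\,G\!\left(\frac{X_n}{\overline{X}}\right)
+\overline{Y}\,G\!\left(\frac{Y_n}{\overline{Y}}\right)
+\frac{(1+\mu\phi)\overline{V}}{N}\,G\!\left(\frac{V_n}{\overline{V}}\right)
+\frac{p(1+s\phi)\overline{Z}}{cN_1}\,G\!\left(\frac{Z_n}{\overline{Z}}\right)\right]\\
&\quad +\beta\,\overline{X}\,\overline{V}\sum_{j=n-m}^{n-1}G\!\left(\frac{X_{j+1}V_j}{\overline{X}\,\overline{V}}\right),
\end{align*}
which is nonnegative and vanishes exactly at $\overline{E}$, by the properties of $G$.

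Next, I would compute $\Delta\overline{\mathcal{L}}_n=\overline{\mathcal{L}}_{n+1}-\overline{\mathcal{L}}_n$ and apply the one-step inequality
\begin{equation*}
G\!\left(\frac{\xi_{n+1}}{\xi^{\star}}\right)-G\!\left(\frac{\xi_n}{\xi^{\star}}\right)\le (\xi_{n+1}-\xi_n)\left(\frac{1}{\xi^{\star}}-\frac{1}{\xi_{n+1}}\right),
\end{equation*}
derived from \eqref{ineqlya} and used in the proof for $E^{\ast}$. Substituting the four equations of \eqref{discmodel} then produces a dissipative quadratic piece $-d(X_{n+1}-\overline{X})^2/X_{n+1}$ together with a collection of mixed ratios that must be reorganized.

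The key algebraic step is to exploit the four identities that hold at $\overline{E}$, namely
\begin{equation*}
\lambda=d\overline{X}+\beta\overline{X}\,\overline{V},\quad
\beta\overline{X}\,\overline{V}=a\overline{Y}+p\overline{Y}\,\overline{Z},\quad
aN\overline{Y}=\mu\overline{V},\quad
c\overline{X}\,\overline{Y}=s,
\end{equation*}
to absorb all the non-logarithmic cross terms. After using the telescoping of the delay sum together with $\ln(X_{n-m+1}V_{n-m}/(X_{n+1}V_n))$, the remaining expression should group into the form
\begin{align*}
\Delta\overline{\mathcal{L}}_n
&\le -\frac{d(X_{n+1}-\overline{X})^2}{X_{n+1}}
-\beta\overline{X}\,\overline{V}\left[G\!\left(\frac{\overline{X}}{X_{n+1}}\right)
+G\!\left(\frac{X_{n-m+1}V_{n-m}\overline{Y}}{\overline{X}\,\overline{V}\,Y_{n+1}}\right)
+G\!\left(\frac{\overline{V}\,Y_{n+1}}{V_{n+1}\overline{Y}}\right)\right]\\
&\quad -p\,\overline{Y}\,\overline{Z}\,G\!\left(\frac{\overline{Z}\,Y_{n+1}}{\overline{Y}\,Z_{n+1}}\right)
-\text{(further nonnegative $G$-terms)},
\end{align*}
each summand being nonpositive since $G\ge 0$. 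The cancellation to the form $\sum 3 - \sum\frac{\cdot}{\cdot} + \ln(\cdot)$, which one then rewrites as $-\sum G(\cdot)$ (products of ratios equal to 1), is the main obstacle: one needs to pick the weight $p(1+s\phi)\overline{Z}/(cN_1)$ on the $G(Z_n/\overline{Z})$ summand precisely so that the $Z$-induced cross terms marry with the $p\,Y_{n+1}Z_n$ terms coming from the $Y$-equation. The assumption $\mathcal{R}_1>1$ ensures $\overline{Z}>0$, so this weighting is admissible.

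Once $\Delta\overline{\mathcal{L}}_n\le 0$ is established, monotonicity of $\overline{\mathcal{L}}_n$ together with its boundedness below yields $\lim_{n\to\infty}\Delta\overline{\mathcal{L}}_n=0$. Since each $G(\cdot)$ summand vanishes only when its argument equals $1$, this forces $X_n\to\overline{X}$, $Y_n\to\overline{Y}$, $V_n\to\overline{V}$ and $Z_n\to\overline{Z}$, and LaSalle's invariance principle concludes that $\overline{E}$ is globally asymptotically stable whenever $\mathcal{R}_1>1$. \qed
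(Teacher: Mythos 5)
Your overall strategy is the same as the paper's: a Volterra-type Lyapunov function built from $G$, a telescoping delay sum, the one-step inequality derived from \eqref{ineqlya}, the equilibrium identities at $\overline{E}$, and LaSalle's invariance principle. However, there is a genuine gap: the weights you assign to the $V$- and $Z$-components are not the ones that make the algebra close, and the step you yourself flag as ``the main obstacle'' --- the exact cancellation of the cross terms --- is precisely where they fail. For the $V$-term the coefficient must be $\frac{(1+\mu\phi)\beta\overline{X}\,\overline{V}}{\mu}$ (as in the paper), not $\frac{(1+\mu\phi)\overline{V}}{N}$: these coincide at $E^{\ast}$, where $\beta X^{\ast}=\mu/N$, but at $\overline{E}$ they differ by the factor $\beta N\overline{X}/\mu=\mathcal{R}_{1}>1$. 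With your weight the term $+\beta\overline{X}V_{n}$ produced by the $X$-equation is only partially cancelled by the $-\frac{\mu}{N}V_{n}$ coming from the $V$-block, leaving a positive residual $\frac{\mu}{N}(\mathcal{R}_{1}-1)V_{n}=p\,\overline{Y}\,\overline{Z}\,V_{n}/\overline{V}$ that no $G$-term absorbs; likewise the $aNY_{n+1}$ contribution then cancels only the $aY_{n+1}$ part of the $Y$-equation instead of the full $(a+p\overline{Z})Y_{n+1}$ needed at $\overline{E}$. Similarly, the $Z$-weight must involve $\overline{X}$, namely $\frac{p(1+s\phi)\overline{Z}}{c\overline{X}}$ (the paper writes it as $\frac{p\overline{Z}}{c\overline{X}}G+\phi\,p\overline{Y}\,\overline{Z}\,G$ and combines them via $c\overline{X}\,\overline{Y}=s$), not $\frac{p(1+s\phi)\overline{Z}}{cN_{1}}$. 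Replacing $\overline{X}$ by $N_{1}$ was tolerable for $E_{0}$ and $E^{\ast}$ because there $Z$ enters the Lyapunov function only linearly ($Z^{\ast}=0$) and the crude bound $\limsup X_{n}\le N_{1}$ suffices for a sign; at $\overline{E}$, where $\overline{Z}>0$, the cross terms must cancel \emph{exactly} at the equilibrium.

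A quick sanity check makes this concrete: every inequality in the chain ($\ln x\le x-1$ and the one-step $G$ inequality) becomes an equality when all ratios equal $1$, so the final upper bound for $\Delta\overline{\mathcal{L}}_{n}$ must vanish identically at $\overline{E}$. With your weights it does not --- the leftover terms of the type $-pY_{n+1}Z_{n}\bigl(1-X_{n}(1+s\phi)/N_{1}\bigr)$ and $p\,\overline{Y}\,\overline{Z}\,V_{n}/\overline{V}$ take nonzero values at $\overline{E}$ --- so the expression cannot be regrouped into a sum of $-G(\cdot)$ terms as claimed. The final grouping is also not a single $-p\overline{Y}\,\overline{Z}\,G\bigl(\overline{Z}Y_{n+1}/(\overline{Y}Z_{n+1})\bigr)$: the paper's computation yields, besides the three standard $G$-terms weighted by $\beta\overline{X}\,\overline{V}$ and the quadratic term in $X_{n+1}-\overline{X}$, a combination of $G\bigl(X_{n}Y_{n+1}Z_{n}/(\overline{X}\,\overline{Y}\,\overline{Z})\bigr)$, $G\bigl(Y_{n+1}/\overline{Y}\bigr)$ and $G\bigl(Y_{n+1}Z_{n}/(\overline{Y}\,\overline{Z})\bigr)$ weighted by $p\overline{Y}\,\overline{Z}$. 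In short, you have identified the correct method, but the specific Lyapunov function you wrote down does not work, and the decisive computation is deferred rather than carried out.
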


\begin{proof}
Define $\mathcal{U}_{n}(X_{n},Y_{n}, V_{n}, Z_{n})$ as
\begin{multline*}
\mathcal{U}_{n}(X_{n},Y_{n},V_{n}, Z_{n})
= \beta \overline{X} \cdot \overline{V} \sum_{j=n-m}^{n-1} G\left( 
\frac{X_{j+1} V_{j}}{\overline{X} \cdot \overline {V}}\right) 
+ p \overline{Y} \cdot \overline{Z} G\left( \frac{Z_{n}}{\overline{Z}} \right)\\
+\frac{1}{\phi}\left[
\overline{X} G\left( \frac{X_{n}}{\overline{X}}\right)
+ \overline{Y} G\left( \frac{Y_{n}}{\overline{Y}}\right)
+\frac{\beta \overline{X} \cdot \overline{V}(1
+\mu \phi) }{\mu}G\left( \frac{V_{n}}{\overline{V}}\right)
+ \frac{p \overline{Z}}{c \overline{X}} G\left(\frac{Z_{n}}{\overline{Z}} \right)\right].
\end{multline*}
Computing and simplifying $\Delta \mathcal{U}_{n}=\mathcal{U}_{n+1}-\mathcal{U}_{n}$, 
we have
\begin{align*}
\Delta \mathcal{U}_{n}
&= \frac{1}{\phi}\left[
\overline{X}\left( G\left( \frac{X_{n+1}}{\overline{X}}\right) 
-G\left( \frac{X_{n}}{\overline{X}}\right) \right)
+ \overline{Y} \left(G\left( \frac{Y_{n+1}}{\overline{Y}}\right)
- G\left( \frac{Y_{n}}{\overline{Y}}\right) \right)\right]\\
&+\frac{\beta \overline{X} \cdot \overline{V}}{\mu\phi} \left( 
G\left( \frac{V_{n+1}}{\overline{V}}\right)-G\left( \frac{V_{n}}{\overline{V}}\right)\right)
+ \frac{p \overline{Z}}{c\phi\overline{X}} \left(G\left(\frac{Z_{n+1}}{\overline{Z}} \right)
-G\left(\frac{Z_{n}}{\overline{Z}} \right)\right)\\
&+\beta \overline{X}. \overline{V}\left( \sum_{j=n-m+1}^{n} G\left( 
\frac{X_{j+1} V_{j}}{\overline{X}. \overline {V}}\right)
-\sum_{j=n-m}^{n-1} G\left( \frac{X_{j+1} V_{j}}{\overline{X}
\cdot \overline {V}}\right) \right)\\
&+ \beta  \overline{X} \cdot \overline{V} \left( G\left( \frac{V_{n+1}}{\overline{V}}\right)
-G\left( \frac{V_{n}}{\overline{V}}\right)\right) 
+ p \overline{Y} \cdot \overline{Z} \left( G\left( \frac{Z_{n+1}}{\overline{Z}} \right)
-G\left( \frac{Z_{n}}{\overline{Z}} \right) \right).
\end{align*}
It follows from inequality \eqref{ineqlya} that
\begin{equation*}
\overline{\xi} \left(G\left( \frac{\xi_{n+1}}{\overline{\xi}}\right)
- G\left( \frac{\xi_{n}}{\overline{\xi}}\right)\right) 
\le \left( 1-\frac{\overline{\xi}}{\xi_{n+1}}\right) 
\left( \xi_{n+1}-\xi_{n}\right)
\end{equation*}
for $\xi = \{X, Y, V, Z\}$. Therefore, 
$\Delta \mathcal{U}_{n}$ takes the form
\begin{align*}
\Delta \mathcal{U}_{n}
&= \frac{1}{\phi}\left[
\left( 1-\frac{\overline{X}}{X_{n+1}}\right) \left( X_{n+1}-X_{n}\right)
+\left( 1-\frac{\overline{Y}}{Y_{n+1}}\right) \left( Y_{n+1}-Y_{n}\right)\right]\\
&+\frac{1}{\phi}\left[\frac{\beta \overline{X}}{\mu} \left( 1
-\frac{\overline{V}}{V_{n+1}}\right) \left( V_{n+1}-V_{n}\right)
+ \frac{p}{c \overline{X}} \left( 1-\frac{\overline{Z}}{Z_{n+1}}\right)
\left( Z_{n+1}-z_{n}\right) \right]\\
&+\beta \overline{X}. \overline{V}\left( \sum_{j=n-m+1}^{n} G\left( 
\frac{X_{j+1} V_{j}}{\overline{X}. \overline {V}}\right)
-\sum_{j=n-m}^{n-1} G\left( \frac{X_{j+1} V_{j}}{\overline{X} 
\cdot \overline {V}}\right) \right)\\
&+ \beta  \overline{X}. \overline{V} \left( \frac{V_{n+1}}{\overline{V}}
- \frac{V_{n}}{\overline{V}} + \ln\left(\frac{V_{n}}{V_{n+1}} \right)\right) 
+p \overline{Y} \left( Z_{n+1} - Z_{n} 
+ \overline{Z} \ln \left(  \frac{Z_{n}}{Z_{n+1}}\right) \right).
\end{align*}
Now, expanding, simplifying, and using the conditions 
of system \eqref{discmodel} at $\overline{E}$, where 
\begin{align*}
\lambda&=d \overline{X} + \beta \overline{X} \overline{V},
\qquad \beta \overline{X} \cdot \overline{V}  
= \overline{Y}(a + p \overline{Z}), 
\qquad a N \overline{Y}  = \mu \overline{V},
\qquad  c \overline{X} \cdot \overline{Y}= s,
\end{align*}
we obtain
\begin{align*}
\Delta \mathcal{U}_{n}
&\le \left( 1- \frac{\overline{X}}{X_{n+1}}\right)(\lambda 
-d X_{n+1}-\beta X_{n+1} V_{n}) \\
&\quad +\frac{\beta \overline{X}}{\mu}\left( 
1- \frac{\overline{V}}{V_{n+1}}\right)\left( a N Y_{n+1} - \mu V_{n+1}\right)\\
&\quad+\left( 1- \frac{\overline{Y}}{Y_{n+1}}\right)\left( \beta X_{n-m+1} V_{n-m} 
- a Y_{n+1}- p Y_{n+1} Z_{n} \right)\\
&\quad+\frac{p}{c \overline{X}} \left( 
1- \frac{\overline{Z}}{Z_{n+1}}\right)\left( c X_{n}Y_{n+1}Z_{n}- s Z_{n+1}\right)\\ 
&\quad+\beta \overline{X}\cdot\overline{V} \left[ \frac{X_{n+1}V_{n}}{\overline{X}
	\cdot \overline{V}}-\frac{X_{n-m+1}V_{n-m}}{\overline{X}
	\cdot \overline{V}}+\ln\left(\frac{X_{n-m+1}V_{n-m}}{X_{n+1}V_{n}} \right)\right.\\
&\left. \qquad+ \frac{V_{n+1}}{\overline{V}}
-\frac{V_{n}}{\overline{V}}
+\ln\left( \frac{V_{n}}{V_{n+1}}\right)\right]
+ p \overline{Y} \left(Z_{n+1} - Z_{n} + \overline{Z} 
\ln\left( \frac{Z_{n}}{Z_{n+1}}\right) \right)\\
&\le p\overline{Y}\overline{Z}\left( -1-\frac{Y_{n+1}Z_{n}}{\overline{Y}\overline{Z}}
+ \frac{Y_{n+1}}{\overline{Y}}+\frac{X_{n}Y_{n+1}Z_{n}}{\overline{X} \overline{Y}\overline{Z}} 
-\ln \left( \frac{X_{n}Y_{n+1}}{\overline{X}\overline{Y}}\right)\right)\\
&\ +\beta \overline{X} \overline{V} \left( 3- \frac{\overline{X}}{X_{n+1}} 
- \frac{X_{n-m+1}V_{n-m} \overline{Y}}{\overline{X} \overline{V} Y_{n+1}}
-\frac{\overline{V} Y_{n+1}}{V_{n+1}\overline{Y}}
+ \ln\left( \frac{X_{n-m+1} V_{n-m}}{X_{n+1} V_{n+1}}\right)\right)\\
&\ -\frac{d (X_{n+1}-\overline{X})^{2}}{X_{n+1}}.
\end{align*}
Thus,
\begin{align*}
\Delta \mathcal{U}_{n} 
&\le - \beta \overline{X} \cdot 
\overline{Y} \left( G\left( \frac{X_{n-m+1}V_{n-m} \overline{Y}}{\overline{X} 
\overline{V} Y_{n+1}} \right) +G\left(\frac{\overline{X}}{X_{n+1}} \right)
+G\left(\frac{\overline{V} Y_{n+1}}{V_{n+1} \overline{Y}}\right)\right)\\
&\quad -p \overline{Y} \overline{Z} \left( -G\left(\frac{X_{n} Y_{n+1} Z_{n}}{\overline{X} 
\overline{Y} \overline{Z}}  \right) - G\left( \frac{Y_{n+1}}{\overline{Y}} \right) 
+G \left( \frac{Y_{n+1} Z_{n}}{\overline{Y}\overline{Z}} \right)\right)\\
&\quad -\frac{d}{X_{n+1}}(X_{n+1}-\overline{X})^{2}.
\end{align*}
Since $\mathcal{R}_{1}>1$, then $\overline{E}$ is strictly positive and 
$\Delta \mathcal{U}_{n}(X_{n},Y_{n},V_{n},Z_{n}) \le 0$ for all 
$n \ge m$, $m \in \mathbbm{N}$. It follows that $\mathcal{U}_{n}$ 
is a monotone decreasing sequence. We also have $\mathcal{U}_{n} \ge 0$. Then,  
$\underset{n \to \infty}{\lim}\mathcal{U}_{n} \ge 0$ and 
$\underset{n \to \infty}{\lim} \Delta \mathcal{U}_{n}= 0$, 
which implies that $\underset{n \to \infty}{\lim} X_{n}= \overline{X}$, 
$\underset{n \to \infty}{\lim}Y_{n}=\overline{Y}$, 
$\underset{n \to \infty}{\lim}V_{n}=\overline{V}$, 
and $\underset{n \to \infty}{\lim}Z_{n}=\overline{Z}$. 
Applying LaSalle's invariance principle, it follows that
$\overline{E}$ is globally asymptotically stable.
\qed
\end{proof}


\subsection{Numerical simulations}

In this section, we perform some illustrative numerical simulations. 
In our simulations we use the values 
\begin{equation}
\begin{array}{c c c c c c c}
\lambda=1, & \ d=0.1, & \ p=0.0001, 
& \ s=0.2, & \ a=0.2, & \ \mu= 3, \ & N=750, 
\end{array}
\end{equation}
which satisfy the parameter ranges presented in Table~\ref{parameters},
and two sets of initial conditions: 
\begin{equation}
\begin{split}
\label{condI1}
\textrm{I}: & \ X_{k}=\psi_{k}^{1}=5, \ 
Y_{k}=\psi_{k}^{2}=1, \ V_{k}=\psi_{k}^{3}=1,
\  Z_{k}=\psi_{k}^{4}=2; \\
\textrm{II}: &\  X_{k}=\psi_{k}^{1}=15,\  
Y_{k}=\psi_{k}^{2}=2,\  
V_{k}=\psi_{k}^{3}=1,\  
Z_{k}=\psi_{k}^{4}=4;\\
\end{split}
\end{equation}
for all $k=-m, -m+1, \ldots, 0$.

\begin{table}[t]
\caption{Parameters, symbols, meaning, and default values used in the HIV literature.}
\begin{center}
\begin{tabular}{l  l c l c l } \hline
Param. & Ref. &\hspace*{0.2 cm} & Meaning & \hspace*{0.2 cm}& Value  \\ \hline \hline
$\lambda$&\cite{Cul}& & source rate of CD4+ T cells& & $1-10$ cells $\mu l^{-1}$\\
&&& && $\textrm{days}^{-1}$ \\
$d$&\cite{Cul} & & Decay rate of healthy cells & & $0.007-0.1$ $\textrm{days}^{-1}$ \\
$\beta$&\cite{Cul}& & Rate at which CD4+T cells &&$0.00025-0.5 \mu l$ \\
&&&become infected &&$\textrm{virions}^{-1} \textrm{days}^{-1}$\\
$a$& \cite{Cul} && Death rate of infected CD4+T cells, && $0.2-0.3 \textrm{days}^{-1}$\\
&&&  not by CTL&&\\
$\mu$&\cite{Per}&& Clearance rate of virus && $2.06-3.81 \textrm{days}^{-1}$\\
$N$&\cite{Ciu}, \cite{Nov}&& Number of virions produced &&$6.25-23599.9 \mu l$\\
 &&&by infected CD4+T cells && $\textrm{virion} \ \textrm{days}^{-1}$\\
 $p$&\cite{Ciu},\cite{Paw}&& Clearance rate of infection &&$1-4.048 \times 10^{-4}$\\
  &&& && $\textrm{virion} \ \textrm{days}^{-1}$\\
 $c$&\cite{Ciu}&& Activation rate of CTL cells && $0.0051-3.912 \textrm{days}^{-1}$\\
 $h$&\cite{Ciu} && Death rate of CTL cells && $0.004-8.087 \textrm{days}^{-1}$ \\
 $\tau$& \cite{Bus}, \cite{Kahn} &&Time delay && $7-21 \textrm{days}^{-1}$ \\ \hline 
\end{tabular}
\end{center}
\label{parameters}
\end{table}
 
For simulations regarding the stability of equilibria, 
we fix $\tau=2$ while $\beta$ and $c$ vary according
with cases I, II and III.

\bigskip
 
Case I. If $\beta=0.00025$ and $c=0.005$, 
then $\mathcal{R}_{0}=0.625<1$ and $\mathcal{R}_{1}=0.3125<1$. 
This means that $X_{n}$, the concentration of the uninfected cells, tends to 
$X_{0}=\frac{\lambda}{d}=10$ while $Y_{n}$, $V_{n}$ and $Z_{n}$ tend to zero.

\bigskip

Case II. If $\beta=0.0005$ and $c=0.01$, 
then $\mathcal{R}_{0}=1.25>1$ and $\mathcal{R}_{1}=0.625<1$. 
Therefore, the solutions of system \eqref{discmodel} tend 
to the equilibrium $E^{\ast}=\left( 8, 1, 50, 0 \right)$.  

\bigskip

Case III. If $\beta=0.0007$ and $c=0.1$, 
then $\mathcal{R}_{0}=1.75>1$ and $\mathcal{R}_{1}=1.6275>1$. 
This yields that all solutions of system \eqref{discmodel} 
tend to $\overline{E}=\left(  9.3, 0.215, 10.75, 1255 \right)$.  

\bigskip

\begin{figure}[t]
\centering\includegraphics[width=8cm]{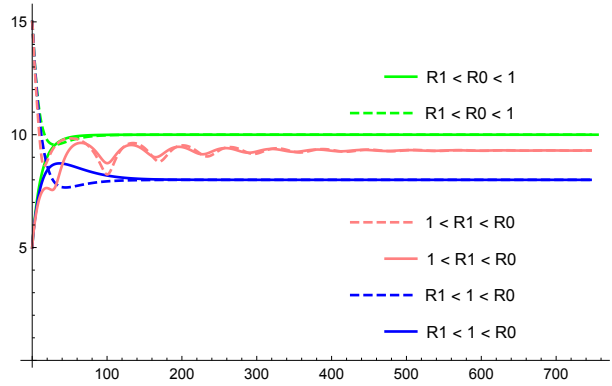}
\caption{The uninfected cells $X_{n}$ along time.}
\label{F1}
\end{figure}
\begin{figure}[t]
\centering\includegraphics[width=8cm]{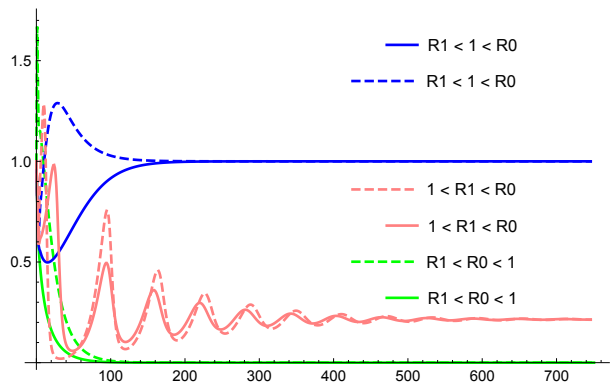}
\caption{The infected cells $Y_{n}$ along time.}
\label{F2}
\end{figure}
\begin{figure}[!]
\centering\includegraphics[width=8cm]{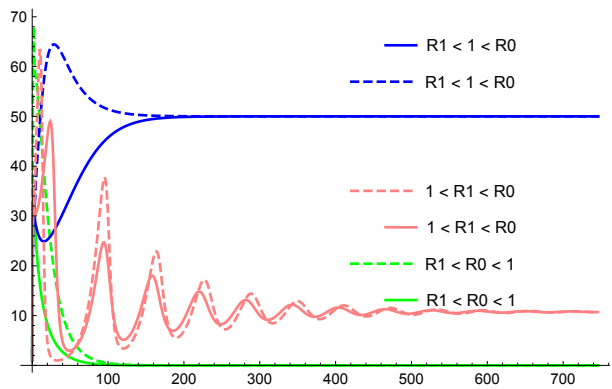}
\caption{The HIV virus $V_{n}$ along time.}
\label{F3}
\end{figure}
\begin{figure}[!]
\centering\includegraphics[width=8cm]{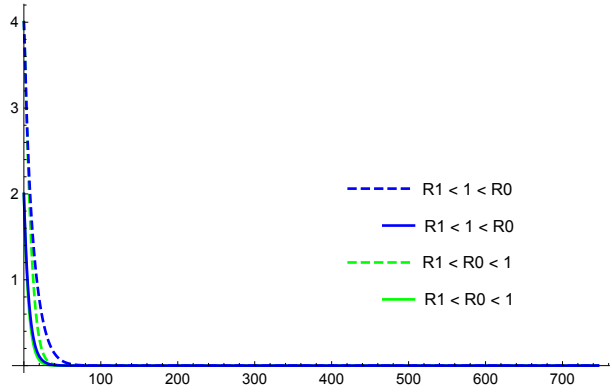}
\caption{The CTL cells $Z_{n}$ along time (Cases I and II).}
\label{F4}
\end{figure}
\begin{figure}[!]
\centering\includegraphics[width=8cm]{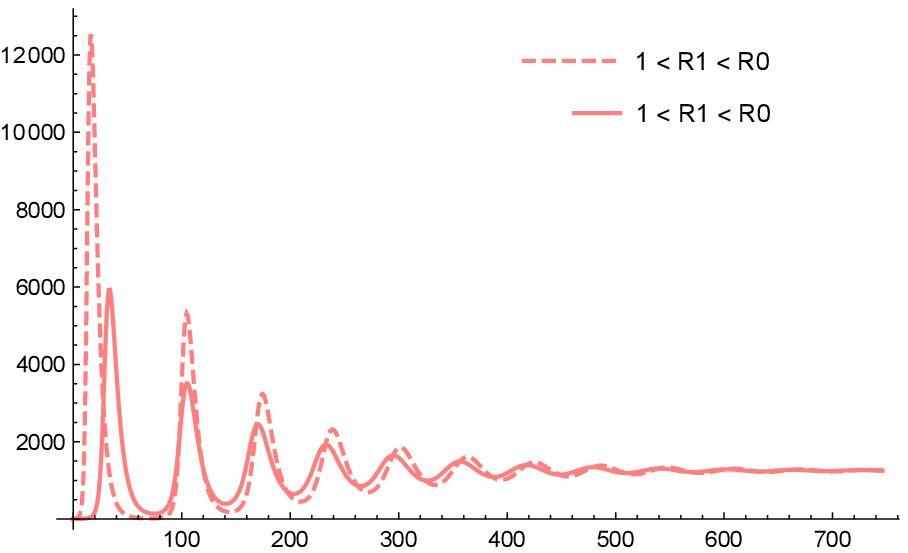}
\caption{The CTL cells $Z_{n}$ along time (Case III).}
\label{F5}
\end{figure}

In Figures~\ref{F1}, \ref{F2} and \ref{F3}, it is represented the behavior 
$X_{n}$, $Y_{n}$ and $V_{n}$ for Cases I, II, and III. 
In Figure~\ref{F4}, it is represented Cases I and II, while 
in Figure~\ref{F5} it is represented the Case III, for better representation 
of the behavior $Z_{n}$ of the CTL cells.


\section{Conclusion}
\label{section4}

In this work, we have proposed and studied the global stability of a delayed discrete-time 
HIV viral infection model with CTL immune response. The model describes the interaction 
between uninfected cells, infected cells, HIV free viruses, and CTL immune response, 
analogous to the continuous model investigated in \cite{HIV}. In the discrete case 
it was incorporated an intracellular time delay. For this model we prove the existence 
of positive and bounded solutions, showing that the model is well posed.  There are two 
threshold parameters, the basic reproduction number $\mathcal{R}_{0}$ and the immune 
response activation number $\mathcal{R}_{1}$. We determined the three equilibrium points 
and related their existence with the previous threshold numbers. Next, using suitable 
Lyapunov functions and LaSalle's invariance principle, we proved the global stability 
for each one of the equilibrium points, extending the results obtained in the continuous model. 
With the same data used in the literature for the continuous-time model, we made some 
simulations, which show the consistence between theoretical and numerical results.


\section*{Acknowledgements}

The authors were partially supported by 
the Portuguese Foundation for Science and Technology (FCT):
Sandra Vaz through the Center of Mathematics and Applications 
of \emph{Universidade da Beira Interior} (CMA-UBI), 
project UIDB/00212/2020; Delfim F. M. Torres through
the Center for Research and Development in Mathematics 
and Applications (CIDMA) of \emph{University of Aveiro}, 
project UIDB/04106/2020.



\end{document}